\newtheorem{THM}{Theorem}
\newtheorem{thm}{Theorem}[section]
\newtheorem{lem}[thm]{Lemma}
\newtheorem{corol}[thm]{Corollary}
\newtheorem{prop}[thm]{Proposition}
\newtheorem{claim}[thm]{Claim}
\newtheorem*{thm*}{Theorem}
\newtheorem*{cnj*}{Conjecture}
\theoremstyle{definition}
\newtheorem{rmk}[thm]{Remark}
\newtheorem{eg}[thm]{Example}
\newtheorem{dfn}[thm]{Definition}
\newtheorem*{conj*}{Conjecture}
\newcommand{\cA}{\mathcal{A}}
\newcommand{\cT}{\mathcal{T}}
\newcommand{\cS}{\mathcal{S}}
\newcommand{\cI}{\mathcal{I}}
\newcommand{\cO}{\mathcal{O}}
\DeclareMathOperator{\Ext}{Ext}
\DeclareMathOperator{\Hom}{Hom}
\DeclareMathOperator{\im}{Im}
\DeclareMathOperator{\HH}{H}
\newcommand{\rR}{\boldsymbol{R}}
\newcommand{\Z}{\mathbb Z}
\newcommand{\C}{\mathbb C}
\newcommand{\F}{\mathbb F}
\newcommand{\R}{\mathbb R}
\newcommand{\PP}{\mathbb P}
\newcommand{\PD}{\check{\mathbb P}}
\DeclareMathOperator{\ts}{\otimes}
\newcommand{\mono}{\hookrightarrow}
\newcommand{\epi}{\twoheadrightarrow}
\newcommand{\xr}{\xrightarrow}
\newcommand{\kk}{{\boldsymbol{k}}}
\newcommand{\TlogZ}{\cT_{\PP^n}(-\log D_Z)}
\newcommand{\TlogA}{\cT_{\PP^n}(-\log D_\cA)}
\newcommand{\TZ}{\cT_{Z}}
\newcommand{\mult}{\mathrm{mult}}
\begin{document}


\title[Logarithmic bundles and line arrangements]{Logarithmic bundles
  and Line arrangements, \\
an
  approach via the standard construction}

\author{Daniele Faenzi}
\email{\tt daniele.faenzi@univ-pau.fr}
\address{Université de Pau et des Pays de l'Adour \\
  Avenue de l'Université - BP 576 - 64012 PAU Cedex - France}
\urladdr{{\url{http://univ-pau.fr/~faenzi/}}}

\author{Jean Vallès}
\email{{\tt jean.valles@univ-pau.fr}}
\address{Université de Pau et des Pays de l'Adour \\
  Avenue de l'Université - BP 576 - 64012 PAU Cedex - France}
\urladdr{\url{http://web.univ-pau.fr/~jvalles/jean.html}}

\keywords{Line arrangements, Freeness of arrangements, Terao's
  conjecture, Logarithmic sheaves, Projective duality}
\subjclass[2010]{52C35, 14F05, 32S22}


\thanks{Both authors partially supported by ANR-09-JCJC-0097-0
  INTERLOW and ANR GEOLMI}

\begin{abstract}
We propose an approach to study logarithmic sheaves $\cT_{\PP^n}(-\log D_\cA)$ associated with
hyperplane arrangements $\cA$ on the projective space $\PP^n$, based on projective
duality, direct image functors and vector bundles methods.
We focus on freeness of line arrangements having a point with high
multiplicity.
\end{abstract}

\maketitle

\section*{Introduction}

Let $\kk$ be a field of characteristic zero, and let
$\cA=(H_1,\ldots,H_m)$ be a hyperplane arrangement in
$\PP^n=\PP^n_\kk$, namely the $H_i$'s are distinct hyperplanes of $\PP^n$. 
The module of logarithmic derivations along the hyperplane arrangement
divisor $D_\cA = H_1 \cup \cdots \cup H_m$, and its sheaf-theoretic
counterpart $\TlogA$ (Saito's sheaf of logarithmic
vector fields) play a prominent role in
the study of $\cA$; let us only mention \cite{terao,schenck:bundle}.

One main issue in the theory of arrangements is to what extent the
sheaf $\TlogA$ depends on the combinatorial type of $\cA$, defined as 
the isomorphism type of the lattice $L_\cA$ of intersections of
hyperplanes in $\cA$.
This lattice is partially ordered by reverse
inclusion, and is equipped with a rank function given by codimension (cf. \cite{orlik-terao:arrangements}).
An important conjecture of Terao (reported in \cite{orlik-terao:arrangements}) asserts that if $\cA$ and $\cA'$ have
the same combinatorial type, and $\TlogA$ splits as a direct sum of line
bundles (i.e. $\cA$ is {\it free}), the same should happen to $\cT_{\PP^n}(-\log D_{\cA'})$.

In this paper we study the sheaf $\TlogA$ relating it to the finite
collection $Z$ of points in the dual space $\PD^n$ associated with
$\cA$ (we write $\cA=\cA_Z$ when $Z=\{z_1,\ldots,z_m\}$ satisfies
$H_i=H_{z_i}$ for all $i$, where $H_z \subset \PP^n$ denotes the hyperplane corresponding to a
point $z \in \PD^n$).
Our first result is that $\cT_{\PP^n}(-\log D_{\cA_Z})$ is obtained 
via the so-called standard construction from the ideal sheaf
$\cI_Z(1)$, namely it is the direct image of the ideal sheaf $\cI_Z(1)$ under the natural
correspondence between $\PP^n$ and $\PD^n$ (Theorem \ref{origine}).

On the projective plane, this allows to obtain special derivations (sections of
$\cT_{\PP^2}(-\log D_{\cA_Z})$) from points of high multiplicity in
$\cA_Z$. 
Using this observation, we show that an arrangement $\cA_Z$ of $2k+r+1$
lines with a point of multiplicity between $k$ and $k+r+1$ is free
with exponents $(k,k+r)$
if and only if $c_2(\cT_{\PP^2}(-\log D_{\cA_Z}))=k(k+r)$, see Theorem \ref{concurrent}.
Here, by definition, $\cA_Z$ free with exponents $(k,k+r)$ means that
$\cT_{\PP^2}(-\log D_{\cA_Z}) \simeq \cO_{\PP^2}(-k) \oplus
\cO_{\PP^2}(-r-k)$), and we write Chern classes on $\PP^n$ as
 integers, with obvious meaning.
Note that the second Chern class is a very weak invariant of the combinatorial type of $\cA_Z$.
For real arrangements, one can push this criterion to points of
slightly lower multiplicity, namely $k-1$ (in fact, a suitable
technical assumption is needed, see Theorem \ref{usa ungar} for the precise statement).

Next, we use the blow-up of the dual plane $\PD^2$ to show that the splitting of $\cT_{\PP^2}(-\log D_{\cA_Z})$
on a general line of $\PP^2$ is determined by the number $d_Z$,
defined as the minimal integer $d$ such that there exists a 
curve of degree $d+1$ in the dual plane $\PD^2$ passing through $Z$ and having
multiplicity $d$ at a general point of $\PD^2$ (Theorem \ref{generic}).

Using this approach, and the relations between the behavior of the
arrangement obtained removing a line $H_y$
from $\cA_Z$ and the possible order of trisecant lines to $Z$ in $\PD^2$
passing through $y$,
we are lead to show that freeness is a
combinatorial property for up to $12$ lines (Theorem \ref{12}).

\medskip

The paper is organised as follows.
In the next section we set up the main correspondence between ideal
sheaves of points in $\PD^n$ and the sheaf of logarithmic derivations on $\PP^n$.
Section \ref{high} contains our result on line arrangements 
having a point of high multiplicity.
In Section \ref{blow up} we show how to relate the number $d_Z$ and
the generic splitting of the sheaf of logarithmic derivations of $\cA_Z$.
In Section \ref{ungar} we develop the above mentioned refinement for
real arrangements.
In Section \ref{Deletion} we outline the relation of our method with
the technique of deletion of one line from an arrangement, with a
focus on freeness.
Finally, Section \ref{up to 12} is devoted to arrangements of $12$ or less lines in $\PP^2$.

\section{Duality and logarithmic vector fields}

Consider $\PP^n=\PP^n_\kk$, and let $Z=\{z_1,\ldots,z_m\}$ be a finite collection of points in the
dual space $\PD^n$.
Each point $y \in \PD^n$ corresponds to a hyperplane $H_{y}$ in
$\PP^n$ (and likewise we associate with $x\in \PP^n$ a hyperplane
of $\PD^n$, denoted by $L_x$). 
So with $Z$ we can associate the hyperplane arrangement $\cA_Z=(H_{z_1},\ldots,H_{z_m})$.
The hyperplane arrangement divisor $D_Z=D_{\cA_Z}$ is defined as $D_Z
= \cup_{i=1}^m H_{z_i}$. Let $f_i$ be a linear form defining $H_{z_i}$
and $f= \Pi_{i=1}^m f_i$ be an equation of $D_Z$.

Saito's sheaf of logarithmic vector fields $\TlogZ$ (see
\cite{saito:logarithmic}) is the sheafification of the module of
logarithmic derivations associated to $D_Z$, mod out by the Euler
derivation.
It can be obtained as
kernel of the map $\psi = (\partial_0 f, \ldots,\partial_n f)$, where
$x_0,\ldots,x_n$ are coordinates in $\PP^n$, and we write
$\partial_i=\partial/\partial(x_i)$.
We will often abbreviate $\TZ = \TlogZ$.

Our first result shows how to obtain $\TZ$ from the ideal
sheaf $\cI_{Z}$ of $Z$ in $\PD^n$ (we denote by $\cI_{X/Y}$ the
ideal sheaf of a subscheme $X$ of a scheme $Y$, and we suppress the 
notation $/Y$ when it is clear from the context).
Consider the flag variety:
\[
\F = \{(x,y) \in \PP^n \times \PD^n \, | \, x \in H_y\},
\]
and the projections $p$ and $q$ of $\F$ onto $\PP^n$ and $\PD^n$.
It is well-known that $\F \simeq \PP(\cT_{\PP^n}(-1))$.

\begin{THM} \label{origine}
  There is a natural isomorphism of sheaves of $\cO_{\PP^n}$-modules:
  \[
  \TZ \simeq p_*(q^*(\cI_Z(1))).
  \]
\end{THM}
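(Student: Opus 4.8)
The plan is to obtain $\TZ$ as the kernel of a residue map produced by applying $p_*$ to the evaluation sequence of $Z$ on $\PD^n$, after first pulling it back to $\F$ by $q$. I would begin by recording that, writing $\pi_1,\pi_2$ for the two projections of $\PP^n\times\PD^n$ and $\cO(a,b)=\pi_1^*\cO_{\PP^n}(a)\otimes\pi_2^*\cO_{\PD^n}(b)$, the incidence $\F$ is the divisor of the canonical section $s=\sum_i x_iy_i$ of $\cO(1,1)$, so that there is a Koszul resolution
\[
0\to\cO(-1,-1)\xrightarrow{\ \cdot s\ }\cO\to\cO_\F\to 0,
\]
and $p=\pi_1|_\F$, $q=\pi_2|_\F$. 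Tensoring this resolution by line bundles pulled back from $\PD^n$ preserves exactness because $s$ is a nonzerodivisor, which is what lets me compute $p_*q^*$ of sheaves on $\PD^n$ fibrewise.

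First I would compute $p_*q^*\cO_{\PD^n}(1)$. Twisting the resolution by $\pi_2^*\cO_{\PD^n}(1)$ and applying $R^\bullet\pi_{1*}$, flat base change along the product gives $\pi_{1*}\cO(-1,0)=\cO_{\PP^n}(-1)$ and $\pi_{1*}\cO(0,1)=V\otimes\cO_{\PP^n}$ with $V=H^0(\PD^n,\cO_{\PD^n}(1))$, while $R^1\pi_{1*}\cO(-1,0)=0$. The connecting arrow is multiplication by $s$, and since $s=\id_V$ under the identification $V^*\otimes V=H^0(\cO(1,1))$, tracking it through the pushforward shows it is the twisted Euler inclusion $\cO_{\PP^n}(-1)\to V\otimes\cO_{\PP^n}$. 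Hence I get a natural isomorphism $p_*q^*\cO_{\PD^n}(1)\simeq\cT_{\PP^n}(-1)$ (the same conclusion follows at once from $\F\simeq\PP(\cT_{\PP^n}(-1))$).

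Next I would push the evaluation sequence $0\to\cI_Z(1)\to\cO_{\PD^n}(1)\to\cO_Z(1)\to 0$, pulled back by the flat map $q$, forward along $p$. Left exactness of $p_*$ already yields $p_*q^*\cI_Z(1)=\Ker\rho$, where $\rho\colon\cT_{\PP^n}(-1)\to p_*q^*\cO_Z(1)$ is the induced map. Because $q^{-1}(z)$ maps isomorphically onto $H_z$ under $p$ for each $z\in Z$, and $\cO_Z(1)$ is (non-canonically) trivial on the finite scheme $Z$, the same fibrewise argument identifies $p_*q^*\cO_Z(1)\simeq\bigoplus_{z\in Z}\cO_{H_z}$, the non-reduced points being handled identically.

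The decisive step is then to identify $\rho$ with Saito's defining map. Representing a local section of $\cT_{\PP^n}(-1)$ by a vector $a=(a_0,\dots,a_n)$ modulo the tautological line, the construction sends it over $z=[y]$ to $\bigl(\sum_i a_iy_i\bigr)|_{H_z}=\theta(f_z)|_{H_z}$, where $\theta=\sum_i a_i\partial_i$ and $f_z=\sum_i y_ix_i$; the Euler ambiguity $a\mapsto a+\lambda x$ changes this by $\lambda f_z$, which vanishes on $H_z$, so $\rho$ is well defined and equals the tuple of residues $\theta\mapsto(\theta(f_z)|_{H_z})_z$. Consequently $\Ker\rho$ consists of those $\theta$ with $\theta(f_z)\in(f_z)$ for all $z$, equivalently $\theta(f)\in(f)$, which is exactly $\TZ=\Ker\psi$. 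The main obstacle I anticipate is precisely this identification: checking that the pushed-forward evaluation map is canonically the residue map, rather than merely a map with the correct rank and Chern classes, and simultaneously controlling the twist $\cO_Z(1)|_Z$ and the non-reduced points, which is where an explicit local computation with the tautological section $s$ seems unavoidable.
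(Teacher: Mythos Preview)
Your argument is correct and gives a genuinely different route from the paper's. Both proofs begin with the ideal sequence $0\to\cI_Z(1)\to\cO_{\PD^n}(1)\to\cO_Z(1)\to 0$, pull it back to $\F$ and push it forward to $\PP^n$, thereby identifying $p_*q^*\cI_Z(1)$ as the kernel of a map $\rho:\cT_{\PP^n}(-1)\to\bigoplus_{z\in Z}\cO_{H_z}$. The approaches diverge at the point of matching $\Ker\rho$ with $\TZ$.

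The paper does \emph{not} identify $\rho$ explicitly. Instead it invokes Dolgachev's desingularisation to exhibit $\TZ$ as the kernel of \emph{some} map $\cT_{\PP^n}(-1)\to\bigoplus_{z\in Z}\cO_{H_z}$, and then proves a rigidity claim: the Hom space $\Hom(\cT_{\PP^n}(-1),\cO_{H_z})$ is one-dimensional for each $z$, so any two maps to $\bigoplus_z\cO_{H_z}$ that are nonzero on every summand (equivalently, whose kernels have $c_1=1-m$) differ by a diagonal automorphism of the target, and hence have isomorphic kernels. You instead track the map $\rho$ through the tautological section $s$ and show directly that it is the residue map $\theta\mapsto(\theta(f_z)|_{H_z})_z$, then use the product rule on $f=\prod_z f_z$ to see that $\Ker\rho$ coincides with $\TZ$.

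Your approach is more elementary (no appeal to \cite{dolgachev:logarithmic}) and yields the isomorphism \emph{canonically}, which is what the word ``natural'' in the statement asks for. The paper's rigidity argument, on the other hand, makes it transparent that the isomorphism class of the kernel is insensitive to the choice of equations $f_z$, and the computation of $\Hom(\cT_{\PP^n}(-1),\bigoplus_z\cO_{H_z})\simeq\kk^Z$ is a reusable fact. One small remark: your aside about ``non-reduced points'' is unnecessary here, since $Z$ is by hypothesis a set of $m$ distinct points.
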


\begin{proof} 
This is somehow implicit in \cite{faenzi-matei-valles:torelli,faenzi-valles:jaca}), still we give here a full proof.
Let us consider the canonical exact sequence of coherent
  sheaves of $\cO_{\PD^n}$-modules:
\[
0 \to \cI_Z(1) \to \cO_{\PD^n}(1) \to \cO_Z(1) \to 0.
\]
Applying $p_* \circ q^*$ to this sequence, we get a long exact sequence:
\begin{equation}
  \label{4 terms}
0 \to p_*(q^*(\cI_Z(1))) \to \cT_{\PP^n}(-1) \to p_*(q^*(\cO_Z(1))) \to \rR^1p_*(q^*(\cI_Z(1))) \to 0.
\end{equation}
\begin{claim} \label{R1}
The sheaf $\rR^1p_*(q^*(\cI_Z(1)))$ is supported at the points $x\in \PP^n$ such that $Z \cap L_x$ is not in general linear position.
\end{claim}
To see this, first note that:
\[
\rR^i p_*(q^*(\cI_Z(1)))=0, \mbox{for $i>1$}, 
\]
which follows from the easy fact that $\HH^i(L_x,\cI_{Z \cap
  L_x}(1))=0$ for all $i>1$ and all $x \in \PP^n$.
Then, by base change the support of the sheaf
$\rR^1p_*(q^*(\cI_Z(1)))$ is given by the points 
$x \in \PP^n$ such that 
$\HH^1(L_x,\cI_{Z \cap  L_x}(1)) \ne 0$, and this is non-zero if and only if
$Z\cap L_x$ is not in general linear position.

Next, we observe that, for any $t \in \Z$, there is a natural isomorphisms:
\begin{equation}
  \label{OZ}
p_*(q^*(\cO_Z(t))) \simeq \bigoplus_{z\in Z} \cO_{H_z}.  
\end{equation}

To see this, first recall that $\cO_Z \simeq \cO_Z(t)$ for all
$t$ since $Z$ has finite length.
Further, $p_*(q^*(\cO_Z(t)))$ can be seen simply as
$p_*(\cO_{q^{-1}(Z)})$ and since ${q^{-1}(Z)}$ is the disjoint union
of the $\{H_z \mid x \in Z\}$, we get the desired isomorphism.
\medskip

Let us now continue the proof of our theorem.
Let $\cS_Z$ be the singular locus of $D_Z$. This is defined by the
vanishing of all partial derivatives of $f$, i.e. the generators of $\cI_{\cS_Z/\PP^n}$
are given by the map $\psi$.
Since $\cS_Z \subset D_Z$, we have the natural exact sequence:
\[
0 \to \cO_{\PP^n}(-m) \xr{f} \cI_{\cS_Z/\PP^n} \to \cI_{\cS_Z/D_Z} \to 0.
\]
In view of the Euler relation, this sequence fits into a commutative diagram:
\begin{equation}
  \label{diagram}
\xymatrix@-2ex{
&& \cO_{\PP^n}(-1) \ar@{=}[r] \ar[d] & \cO_{\PP^n}(-1) \ar^{f}[d] \\
0\ar[r] & \TZ \ar@{=}[d] \ar[r] & \cO_{\PP^n}^{n+1}
\ar^-{\psi}[r] \ar[d] & \cI_{\cS_Z/\PP^n}(m-1) \ar[d] \ar[r] & 0 \\
0\ar[r] & \TZ   \ar[r] & \cT_{\PP^n}(-1) \ar[r] & \cI_{\cS_Z/D_Z}(m-1) \ar[r] & 0, 
}
\end{equation}
where the central column is the Euler sequence.
From \cite[Proposition 2.4]{dolgachev:logarithmic} desingularization
gives an inclusion of $\cI_{\cS_Z/D_Z}(m)$ into $p_*(\omega_{q^{-1} Z}
\ts \omega^*_{\PP^n}) \simeq p_*(q^*(\cO_Z))(1)$.

So $\TZ$ and $p_*(q^*(\cI_Z(1)))$ are both defined as kernel of maps
$\cT_{\PP^n}(-1) \to p_*(q^*(\cO_Z))$, and we want to see that this turns
them into isomorphic sheaves.

\begin{claim}
Any two sheaves defined as kernel of maps $\cT_{\PP^n}(-1) \to
\bigoplus_{z\in Z} \cO_{H_{z}}$ are isomorphic, provided that the
first Chern class of both of them is $1-m$.
\end{claim}
To see this, let $\alpha,\beta \in
\Hom_{\cO_{\PP^n}}(\cT_{\PP^n}(-1),\bigoplus_{z\in Z} \cO_{H_{z}})$
set $E = \ker(\alpha)$, $F=\ker(\beta)$, and assume $c_1(E)=c_1(F)=1-m$.
For all $z \in Z$, we have $\Hom_{\cO_{\PP^n}}(\cT_{\PP^n}(-1),\cO_{H_{z}}) \simeq
\HH^0(\PP^n,\Omega_{\PP^n}(1) \otimes \cO_{H_{z}})$.
Moreover,
there is a natural isomorphism $\Omega_{\PP^n}(1) \otimes
\cO_{H_z} \simeq \Omega_{H_z}(1) \oplus \cO_{H_z}$,
So we have:
\[
\mbox{$\Hom_{\cO_{\PP^n}}(\cT_{\PP^n}(-1),\bigoplus_{z\in Z} \cO_{H_{z}})
\simeq \bigoplus_{z\in Z} \HH^0(H_z,\cO_{H_z})$},
\]
indeed $\HH^0(H_y,\Omega_{H_y}(1))=0$.
Therefore, we may write $\alpha$ and $\beta$ as
$\alpha=(\alpha_z)_{z\in Z}$ and $\beta=(\beta_z)_{z\in Z}$ with
$\alpha_z,\beta_z \in \kk$.
The assumption $c_1(E)=1-m$ implies that $\alpha_z \ne 0$ for all $z\in
Z$, and analogously $\beta_z \ne 0$ for all $z\in Z$ since $c_1(F)=1-m$.
Now consider the automorphism $\gamma$ of $\bigoplus_{z\in Z} \cO_{H_{z}}$
defined on each factor $\cO_{H_z}$ as multiplication by $\frac{\beta_z}{\alpha_z}$.
We have a commutative diagram:
\[
\xymatrix{
  E \ar[r] \ar@{.>}^{\delta}[d] & \cT_{\PP^n}(-1) \ar^-{\alpha}[r] \ar@{=}[d] & \bigoplus_{z\in Z} \cO_{H_{z}} \ar^{\gamma}[d]\\ 
  F \ar[r] & \cT_{\PP^n}(-1) \ar^-{\beta}[r] & \bigoplus_{z\in Z} \cO_{H_{z}}
}
\]
The map $\delta : E \to F$ induced by $\gamma$ has an inverse induced
by $\gamma^{-1}$, and we get that $E$ is isomorphic to $F$.

To finish the proof, we only need to recall that $p_*(q^*(\cO_Z))
\simeq \bigoplus_{z\in Z} \cO_{H_{z}}$ by \eqref{OZ}, and 
the first Chern class of both $\TZ$ and
$p_*(q^*(\cI_Z(1)))$ equals $1-m$: for $\TZ$ if follows from instance
from \eqref{diagram}, and for $p_*(q^*(\cI_Z(1)))$ it follows from
\eqref{4 terms}, since $\rR^1p_*(q^*(\cI_Z(1)))$ is supported in
codimension at least $2$ by Claim \ref{R1}.
This finishes the proof of the theorem.
\end{proof}

As an example of application of this description of $\TZ$ as direct
image, let us mention the following well-known result (a quick proof
will be given in the next section).

\begin{prop} \label{facile}
  Let $\cA$ be an arrangement of $m$ lines, $k \ge 0$ be an integer, $x$ be a
  point of multiplicity $k+1$ of $D_{\cA}$. Set $\cA'=\cA \setminus \{H
    \in \cA \mid x \in H\}$.
  Then the following are equivalent:
  \begin{enumerate}[i)]
  \item \label{un} the arrangement $\cA$ is free with exponents $(k,m-k-1)$;
  \item \label{deux} any point of multiplicity $h \ge 2$ in $D_{\cA'}$ has multiplicity
    $h+1$ in $D_\cA$.
  \end{enumerate}
\end{prop}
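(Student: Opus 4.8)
The plan is to read freeness off a single global section of $\TZ$ produced from the high-multiplicity point through Theorem \ref{origine}, and then to match the non-vanishing of that section with \eqref{deux} by an incidence count. Write $\cA=\cA_Z$ and dualize: the point $x$ of multiplicity $k+1$ corresponds to a line $\ell=L_x\subset\PD^2$ carrying the $k+1$ points $Z_\ell=Z\cap\ell$ dual to the lines of $\cA$ through $x$, and $\cA'=\cA_{Z'}$ with $Z'=Z\setminus Z_\ell$. In these terms \eqref{deux} reads: every line meeting $Z'$ in at least two points meets $\ell$ at a point of $Z_\ell$; equivalently, every multiple point of $\cA'$ lies on (exactly) one of the $k+1$ lines of $\cA$ through $x$ --- it cannot lie on two, since these all meet only at $x$.

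Using $\TZ\simeq p_*(q^*(\cI_Z(1)))$ and the projection formula, the global sections of $\TZ(k)$ are the degree-$k$ logarithmic derivations of $D_\cA$ (modulo Euler), and the $k+1$ collinear points $Z_\ell$ single out such a derivation $\theta$ of degree $k$ (built from the Hamiltonian derivation of the product of the $k+1$ linear forms through $x$, corrected by a $\partial/\partial x_2$-term so as to remain logarithmic along $\cA'$). This gives a nonzero $\sigma\colon\cO_{\PP^2}(-k)\to\TZ$; since $\rk\TZ=2$ and $c_1(\TZ)=1-m$, its saturation has cokernel $\cI_W(-(m-1-k))$ for a $0$-dimensional $W\subset\PP^2$, and $\deg W=c_2(\TZ)-k(m-1-k)$. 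By the splitting criterion for rank-two bundles on $\PP^2$, $\cA$ is free with exponents $(k,m-1-k)$ if and only if $W=\emptyset$, i.e. $c_2(\TZ)=k(m-1-k)$.

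It then suffices to compute $c_2(\TZ)$ and compare. From the well-known formula $c_2(\TZ)=\sum_{p}(m_p-1)-(m-1)$ (equivalently $\binom{m-1}{2}-\sum_p\binom{m_p-1}{2}$), with $p$ running over the points of multiplicity $m_p\ge2$ of $D_\cA$, I would split the singular points according to their position relative to the $k+1$ lines through $x$: the vertex $x$ contributes $k$; the points $p\ne x$ on a line through $x$ contribute $(k+1)(m-k-1)$ altogether, by counting pairs (line through $x$, line of $\cA'$); and the remaining singular points are exactly the multiple points $q$ of $\cA'$, of $\cA'$-multiplicity $h_q$, lying on no line through $x$, each contributing $h_q-1$. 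Using $m-1=k+(m-k-1)$ this reduces to
\[
c_2(\TZ)=k(m-1-k)+\sum_{q}(h_q-1),
\]
the sum over the multiple points of $\cA'$ not joined to $x$. As $h_q\ge2$ each summand is $\ge1$, so the sum vanishes precisely when there is no such $q$, i.e. precisely under \eqref{deux}. Combined with the previous paragraph this yields \eqref{un}$\Leftrightarrow$\eqref{deux}; note that for \eqref{un}$\Rightarrow$\eqref{deux} the section is not even needed, since freeness forces $c_2(\TZ)=k(m-1-k)$ and the displayed identity then kills the sum.

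The main obstacle is the construction of $\sigma$ and the control of its vanishing: producing the degree-$k$ derivation $\theta$ explicitly, checking that its image saturates to $\cO_{\PP^2}(-k)$ (rather than to a smaller twist), and identifying $W$ intrinsically with the unjoined multiple points of $\cA'$. Once such a section is in hand the reduction to $c_2(\TZ)$ and the incidence count are routine; alternatively, one may bypass the explicit construction by invoking the criterion of Theorem \ref{concurrent} (the multiplicity $k+1$ lies in the admissible range), after which only the combinatorial identity above remains.
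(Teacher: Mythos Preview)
Your primary line via an explicit degree-$k$ derivation has a real gap, and it is not just a matter of bookkeeping. The section that Theorem~\ref{origine} (and the machinery of Section~\ref{high}) hands you from a point of multiplicity $h=k+1$ is $\cO_{\PP^2}(h-m)=\cO_{\PP^2}(-(m-k-1))\hookrightarrow\TZ$, i.e.\ a derivation of degree $m-k-1$, not of degree $k$. In general there is no degree-$k$ logarithmic derivation: for instance, take $3$ concurrent lines and $7$ generic lines ($k=2$, $m=10$); from the exact sequence \eqref{Gamma} one computes $\HH^0(\PP^2,\TZ(2))=0$, so the ``Hamiltonian plus $\partial_2$-correction'' you sketch cannot be made logarithmic along $\cA'$. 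Thus your saturation and vanishing-locus analysis for $\sigma$ never gets off the ground in the non-free direction.

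Your bypass via Theorem~\ref{concurrent} together with the combinatorial identity
\[
c_2(\TZ)=k(m-1-k)+\sum_{q}(h_q-1),
\]
the sum running over multiple points of $\cA'$ lying on no line through $x$, is correct and yields the equivalence cleanly (the edge cases $k=0$ and $m=k+1$, which fall outside the hypotheses $k\ge 1$, $r\ge 0$ of Theorem~\ref{concurrent}, are trivial). This is a genuinely different route from the paper's proof. The paper works directly with the long exact sequence obtained by applying $p_*\circ q^*$ to $0\to\cI_{Z'}\to\cI_Z(1)\to\cO_L(-k)\to 0$: the section it produces is the degree-$(m-k-1)$ one, and condition \eqref{deux} is read off as the injectivity of the induced map $\rR^1p_*(q^*(\cI_{Z'}))\to\rR^1p_*(q^*(\cI_Z(1)))$, which is equivalent to surjectivity of $\TZ\to\cO_{\PP^2}(-k)$ and hence to the splitting. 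What the paper's approach buys is that the obstruction scheme $\Gamma$ (the ``bisecants to $Z'$ meeting $L$ off $Z$'') is identified intrinsically, without any Chern-class bookkeeping; what your approach buys is a transparent combinatorial translation of \eqref{deux} into a $c_2$-condition, at the cost of invoking the heavier Theorem~\ref{concurrent}.
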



\section{Line arrangements with a point of high multiplicity}
\label{high}
Here we study freeness of line arrangement that admit a point having high
multiplicity with respect to the exponents.
Recall that a line arrangement $\cA_Z$ is {\it free with exponents
  $(a,b)$} if $\TZ \simeq \cO_{\PP^2}(-a) \oplus \cO_{\PP^2}(-b)$.
Of course, this implies that $c_2(\TZ)=a b$.

\begin{THM} \label{concurrent}
  Let $k \ge 1$, $r\ge 0$ be integers, set $m=2k+r+1$, and consider
  a line arrangement $\cA$ of $m$ lines with a point of
  multiplicity $h$ with $k \le h \le k+r+1$.
  Then $\cA$ is free with exponents $(k,k+r)$ if and only if
  $c_2(\cT_{\PP^2}(-\log D_\cA))=k(k+r)$.
\end{THM}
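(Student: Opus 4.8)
The plan is to prove the nontrivial implication: assuming $c_2(\TZ)=k(k+r)$, deduce that $\cA$ is free with exponents $(k,k+r)$; the converse is immediate, since $\TZ\simeq\cO_{\PP^2}(-k)\oplus\cO_{\PP^2}(-k-r)$ gives $c_2(\TZ)=k(k+r)$ at once. Throughout I set $E:=\TZ$, a locally free sheaf of rank two on $\PP^2$ with $c_1(E)=1-m=-(2k+r)$. I would first normalise: let $\cO_{\PP^2}(-a)\hookrightarrow E$ be a sub-line bundle of maximal degree, with saturated quotient $\cI_W(-b)$, so that $a+b=2k+r$, $a\le k+\tfrac r2\le b$, and
\[
\deg W = c_2(E)-ab = k(k+r)-ab \ge 0 .
\]
The inequality $a(2k+r-a)\le k(k+r)$ has roots $a=k$ and $a=k+r$; since $a\le k+\tfrac r2<k+r$, it forces $a\le k$, with equality if and only if $\deg W=0$, i.e. $W=\emptyset$ and $b=k+r$. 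Thus the $c_2$ hypothesis already gives $a\le k$, and \emph{freeness with exponents $(k,k+r)$ is equivalent to $a=k$}.

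Granting $a=k$, the splitting is automatic. The maximal sub-line bundle is then $\cO_{\PP^2}(-k)$ with $W=\emptyset$, so we obtain an exact sequence
\[
0\to\cO_{\PP^2}(-k)\to\TZ\to\cO_{\PP^2}(-k-r)\to 0,
\]
which splits because $\Ext^1_{\cO_{\PP^2}}(\cO_{\PP^2}(-k-r),\cO_{\PP^2}(-k))\simeq\HH^1(\PP^2,\cO_{\PP^2}(r))=0$. Hence $\TZ\simeq\cO_{\PP^2}(-k)\oplus\cO_{\PP^2}(-k-r)$, which is exactly freeness with exponents $(k,k+r)$. Equivalently, a twist by $k$ gives $c_1(E(k))=-r$ and $c_2(E(k))=c_2(E)+k\,c_1(E)+k^2=k(k+r)-k(2k+r)+k^2=0$, so the statement $a=k$ says precisely that the section of $E(k)$ associated to $\cO_{\PP^2}(-k)\hookrightarrow E$ vanishes nowhere (its zero scheme has length $c_2(E(k))=0$) rather than along a curve.

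The entire content is therefore to \emph{exclude $a<k$}, i.e. to rule out a logarithmic derivation of degree $<k$, and this is where the multiplicity hypothesis enters; I expect it to be the main obstacle. Here I would invoke the standard construction of Theorem \ref{origine}: by $\TZ\simeq p_*q^*(\cI_Z(1))$ and the projection formula, a logarithmic derivation of degree $d$ is an element of $\HH^0(\F,q^*(\cI_Z(1))\otimes p^*(\cO_{\PP^2}(d)))$, which in turn corresponds to a curve of controlled degree in $\PD^2$ passing through the collection $Z$. The point $x$ of multiplicity $h$ dualises to a line $L_x\subset\PD^2$ carrying $h\ge k$ of the points of $Z$; this collinearity is the geometric input, and I would use the upper bound $h\le k+r+1$ to keep the largest exponent $b$ from exceeding $k+r$ (the derivation of degree $h-1$ produced by the $h$ concurrent lines forces $b\ge h-1$), and the lower bound $h\ge k$, together with the collinearity imposing dependent conditions on the residues of a candidate low-degree derivation, to show that no derivation of degree $\le k-1$ can exist. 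Concretely I would analyse the restriction (residue) map $\HH^0(\cT_{\PP^2}(d-1))\to\bigoplus_{z\in Z}\HH^0(\cO_{H_z}(d))$, whose kernel is $\HH^0(\TZ(d))$: the $h\ge k$ collinear points of $Z$ make this map degenerate in a way that, for $d\le k-1$, leaves no kernel, forcing $a\ge k$ and hence $a=k$. Pinning down this rank computation precisely in the full range $k\le h\le k+r+1$ is the delicate step.
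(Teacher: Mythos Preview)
Your reduction is correct and matches the paper's: once $c_2(\TZ)=k(k+r)$, the twist $E(k)$ has $c_1=-r$, $c_2=0$, and freeness with exponents $(k,k+r)$ is equivalent to $\HH^0(\PP^2,\TZ(k-1))=0$. This is exactly the content of the paper's Lemma~\ref{splitting}. The paper also argues the converse direction and the splitting of the resulting extension just as you do.

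The gap is that you do not actually carry out the crucial step of ruling out a section in degree $\le k-1$; you only describe a hoped-for rank analysis of the residue map and concede it is ``the delicate step''. As stated, this is not a proof, and the outlined route is not the one that works in the paper. The paper does not attempt to control the global residue map. Instead it uses the $h$-secant line $L=L_x\subset\PD^2$ directly: from $0\to\cI_{Z'}\xrightarrow{g}\cI_Z(1)\to\cO_L(1-h)\to 0$ (with $Z'=Z\setminus L$) and $p_*\circ q^*$, one extracts a concrete short exact sequence
\[
0\to\cO_{\PP^2}(h-m)\to\TZ\to\cI_\Gamma(1-h)\to 0,
\]
where $\Gamma$ is a length-$(c_2(\TZ)-(m-h)(h-1))$ subscheme parametrising bisecants to $Z'$ meeting $L$ outside $Z$. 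If an unstable section existed, i.e.\ $\HH^0(\TZ(k-1))\ne 0$, then the hypothesis $h\le k+r+1$ kills the left term after twisting (since $h+k-m-1<0$), forcing $\HH^0(\cI_\Gamma(k-h))\ne 0$; together with $h\ge k$ this gives $h=k$ and $\Gamma=\emptyset$, whereas a Chern-class count shows $\Gamma$ has length $r+1\ge 1$. That contradiction is the entire content, and it is precisely the piece your proposal is missing.

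A smaller remark: your aside that ``the derivation of degree $h-1$ produced by the $h$ concurrent lines forces $b\ge h-1$'' is mis-stated. The section coming from the $h$ concurrent lines is the map $\cO_{\PP^2}(h-m)\hookrightarrow\TZ$ above, hence a derivation of degree $m-h$, not $h-1$; it yields $a\le m-h$ and thus $b\ge h-1$. The conclusion is right, but by itself it does not bound $a$ from below, which is what you need.
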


\begin{rmk}
  In the above setting, it turns out that if $h\ge k+r+2$, then $\cA$
  cannot be free with exponents $(k,k+r)$, see Corollary \ref{res to line}.
\end{rmk}

\begin{rmk}
A transparent way to compute the Chern class $c_2(\cT_{\PP^2}(-\log D_\cA))$ is the following.
Set $b_{\cA,h}$ for the number of points of multiplicity $h$ of
$D_\cA$ (we will also call them the points of multiplicity $h$ ``of $\cA$'').
Then we have the relations:
  \begin{align}
    \label{multipli} b_{\cA,2}+3b_{\cA,3}+6b_{\cA,4} +\cdots & = {m \choose 2}, \\
    \label{multipli chern} b_{\cA,3}+3b_{\cA,4}+6b_{\cA,5} +\cdots & ={m-1 \choose 2} - c_2(\TlogA).
  \end{align}
The second equality, valid when not all lines in $\cA$ pass through
the same point, follows immediately from the long exact sequence:
\begin{equation}
  \label{steiner}
0 \to \cT_Z \to \cO_{\PP^n}^{m-1} \to \cO_{\PP^n}^{m-3}(-1) \to
\rR^1p_*(q^*(\cI_{Z}(1))) \to 0,
\end{equation}
obtained by resolving $q^*(\cI_Z(1))$ in $\PP^2 \times \PD^2$ via:
\[
0 \to pr_1^*(\cO_{\PP^2}(-1)) \otimes pr_2^*(\cI_Z(1)) \to
pr_2^*(\cI_Z(1)) \to q^*(\cI_Z(1)) \to 0.
\]
Here, $pr_i$ denote the projections onto the factors of $\PP^2 \times
\PD^2$ (we refer to \cite{faenzi-matei-valles:torelli} for more on the
matrix of linear forms appearing in \eqref{steiner}).
Indeed, in view of the proof of Claim \ref{R1}, the sheaf
$\rR^1p_*(q^*(\cI_{Z}(1)))$ is the direct sum of
the $\cO_{\langle x_j^{m_j-2}\rangle}$, 
where the $x_j$'s vary in
the (set-theoretic) support of $\cS_Z$ and $m_j=
\mult(D_{\cA_Z},x_j)$.
Here $\langle x^i\rangle$ denotes the $(i-1)^{\mathrm{th}}$
infinitesimal neighborhood of $x$ in $\PP^2$.
This is the subscheme cut
in $\PP^2$ by the $i^{\mathrm{th}}$ power of the ideal defining $x$.
It has length ${i+1 \choose 2}$.
Formula \eqref{multipli chern} thus follows
computing Chern classes in \eqref{steiner}.
\end{rmk}

\begin{eg}
  The result gives a quick way to show that an arrangement having
  the combinatorial type of the Hesse 
  arrangement of the $12$ lines
  passing through the $9$ inflection points of a smooth complex
  plane cubic is free with exponents $(4,7)$.
\begin{figure}[h!]
  \centering
\begin{tikzpicture}[scale=1.5]
\draw (0,0) node {$\bullet$};
\draw (-1,1) node {$\bullet$};
\draw (-1,-1) node {$\bullet$};
\draw (1,-1) node {$\bullet$};
\draw (1,1) node {$\bullet$};
\draw (-1,0.5) node {$\bullet$};
\draw (-0.5,-1) node {$\bullet$};
\draw (1,-0.5) node {$\bullet$};
\draw (0.5,1) node {$\bullet$};
\draw (-2,1) -- (2,1);
\draw (-2,-1) -- (2,-1);
\draw (-2,-1) -- (2,-1);
\draw (-1,-1.5) -- (-1,1.5);
\draw (1,-1.5) -- (1,1.5);
\draw (-1.5,-1.5) -- (1.5,1.5);
\draw (-1.5,1.5) -- (1.5,-1.5);
\draw (-0.75,-1.5) -- (0.75,1.5);
\draw (-1.5,0.75) -- (1.5,-0.75);
\draw (-7/6,1) -- (-0.5,-1);
\draw (-1,-7/6) -- (1,-0.5);
\draw (7/6,-1) -- (0.5,1);
\draw (1,7/6) -- (-1,0.5);
\draw [dashed] (-1,1) arc (90:270:13/12);
\draw [dashed] (1,1) arc (0:180:13/12);
\draw [dashed] (1,-1) arc (-90:90:13/12);
\draw [dashed] (-1,-1) arc (180:360:13/12);
\end{tikzpicture}
  \caption{Hesse arrangement}
\end{figure}
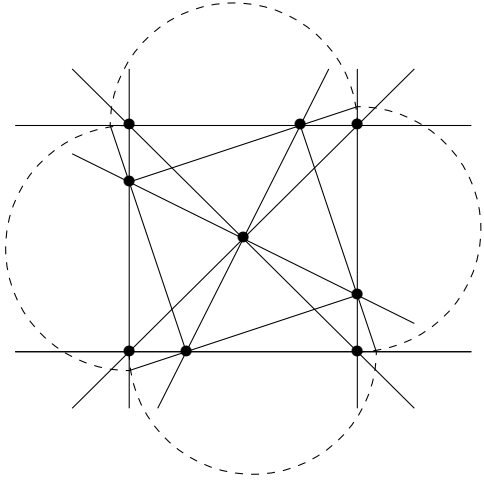
\end{eg}

To prove the theorem, we will need the following lemma.
The way to use it will frequently be by contradiction: assume that a 
bundle $E$ with Chern classes as below does not split, hence take a non-zero element of $\HH^0(\PP^2,E(-1))$,
and look for a contradiction with some other property.
We will sometimes call a non-zero element of $\HH^0(\PP^2,E(-1))$
an {\it unstable section}.

\begin{lem} \label{splitting}
  Let $E$ be a rank-$2$ vector bundle on $\PP^2$ and assume
  $c_1(E)=-r$ for some $r\ge 0$ and $c_2(E)=0$.
  Then, the following are equivalent:
  \begin{enumerate}[i)]
  \item \label{splits} the bundle $E$ splits as $\cO_{\PP^2} \oplus \cO_{\PP^2}(-r)$,
  \item \label{no sections} we have $\HH^0(\PP^2,E(-1))=0$,
  \item \label{one line} there is a line $H$ of $\PP^2$ such that $E_{|H} \simeq \cO_H
    \oplus \cO_H(-r)$.
  \end{enumerate}
  For any line $H$ of $\PP^2$ we have $E_{|H} \simeq \cO_H(s)
    \oplus \cO_H(-r-s)$, for some integer $s\ge 0$.
\end{lem}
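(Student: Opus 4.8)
The plan is to establish the three numbered conditions as a cycle $\ref{splits}\Rightarrow\ref{one line}\Rightarrow\ref{no sections}\Rightarrow\ref{splits}$, and to prove the closing assertion about an arbitrary line separately, since it is really a statement about the maximal destabilizing sub-line-bundle of $E$. The implication $\ref{splits}\Rightarrow\ref{one line}$ is immediate, as restricting $\cO_{\PP^2}\oplus\cO_{\PP^2}(-r)$ to any line $H$ yields $\cO_H\oplus\cO_H(-r)$. For $\ref{one line}\Rightarrow\ref{no sections}$ I fix a line $H$ with $E_{|H}\simeq\cO_H\oplus\cO_H(-r)$ and equation $\ell$, and argue by infinite descent: any $\sigma\in\HH^0(\PP^2,E(-1))$ restricts into $\HH^0(H,\cO_H(-1)\oplus\cO_H(-r-1))=0$, so $\sigma=\ell\,\sigma_1$ with $\sigma_1\in\HH^0(\PP^2,E(-2))$; since $E(-2)_{|H}$ has no sections either, the same applies to $\sigma_1$, and iterating gives $\sigma=\ell^{k}\sigma_k$ with $\sigma_k\in\HH^0(\PP^2,E(-1-k))$ for all $k$. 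As $\HH^0(\PP^2,E(-N))=0$ for $N\gg0$, this forces $\sigma=0$.

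The crux is $\ref{no sections}\Rightarrow\ref{splits}$. My aim is to produce a \emph{saturated} inclusion $\cO_{\PP^2}\hookrightarrow E$: its cokernel is then a torsion-free rank-one sheaf $\cI_W(-r)$, and comparing second Chern classes gives $\mathrm{length}(W)=c_2(E)=0$, so $W=\emptyset$ and the resulting extension
\[
0\to\cO_{\PP^2}\to E\to\cO_{\PP^2}(-r)\to0
\]
splits because $\Ext^1_{\cO_{\PP^2}}(\cO_{\PP^2}(-r),\cO_{\PP^2})\simeq\HH^1(\PP^2,\cO_{\PP^2}(r))=0$ for $r\ge0$. To build the inclusion I split into two cases according to stability. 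If $r>0$ then $c_1(E)^2-4c_2(E)=r^2>0$ violates the Bogomolov inequality, so $E$ is unstable; its maximal destabilizing subsheaf is a saturated sub-line-bundle $\cO_{\PP^2}(t)\hookrightarrow E$ with $t>-r/2$ and $t=\max\{j:\HH^0(\PP^2,E(-j))\ne0\}$. Hypothesis $\ref{no sections}$ forces $t\le0$ (a sub-line-bundle of degree $\ge1$ would give a nonzero section of $E(-1)$), whereas writing the quotient as $\cI_W(-r-t)$ and imposing $c_2(E)=0$ yields $\mathrm{length}(W)=t(t+r)$; since $t+r>r/2>0$ and the length is non-negative, $t\ge0$, hence $t=0$ and $\cO_{\PP^2}\hookrightarrow E$ is saturated. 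If $r=0$ then $\ref{no sections}$ says exactly that $E$ is semistable of slope $0$; Riemann--Roch gives $\chi(E)=2$, while Serre duality and $E^\vee\simeq E$ give $\HH^2(\PP^2,E)\simeq\HH^0(\PP^2,E(-3))^\vee=0$ (no sections in negative slope), so $\HH^0(\PP^2,E)\ne0$, and any nonzero section generates a saturated $\cO_{\PP^2}\hookrightarrow E$ since its divisorial zero locus would be an effective divisor of degree $\le\mu(E)=0$.

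For the final assertion, Grothendieck's theorem gives $E_{|H}\simeq\cO_H(s)\oplus\cO_H(-r-s)$ with $s\ge-r-s$, because the two degrees sum to $c_1(E)\cdot H=-r$; it remains to check $s\ge0$. This is automatic when $r=0$ (then $s\ge-s$). When $r>0$ I reuse the saturated maximal destabilizing $\cO_{\PP^2}(t)\hookrightarrow E$, which exists by instability and satisfies $t\ge0$ by the length computation above (no use of $\ref{no sections}$ here). Maximality of $t$ means $\HH^0(\PP^2,E(-t-1))=0$, so the inclusion does not factor through $E(-1)$ for any line, and its restriction $\cO_H(t)\to E_{|H}$ is therefore nonzero for \emph{every} line $H$; a nonzero map $\cO_H(t)\to\cO_H(s)\oplus\cO_H(-r-s)$ then forces $s\ge t\ge0$. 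I expect the genuine obstacle to be the case $r>0$ of $\ref{no sections}\Rightarrow\ref{splits}$: one must convert the cohomological vanishing into the existence of a destabilizing line bundle of exactly the right degree, and it is the hypothesis $c_2(E)=0$ that pins this degree down to $0$.
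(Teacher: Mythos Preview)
Your argument is correct, and proceeds along a genuinely different path from the paper's. The paper shows \eqref{splits}$\Leftrightarrow$\eqref{one line} by citing Elencwajg--Forster, proves \eqref{no sections}$\Rightarrow$\eqref{splits} by a single Riemann--Roch/Serre-duality argument (if the minimal $t$ with $\HH^0(E(t))\neq0$ were positive, one gets $t\le r-3$ from duality and $t\ge r$ from $c_2(E(t))\ge0$), and deduces the final claim on restrictions by assuming $-r<s<0$, invoking the already-proved equivalence to get $\HH^0(E(-1))\ne0$, and then contradicting this via descent along the restriction sequence to $H$.

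You instead make \eqref{one line}$\Rightarrow$\eqref{no sections} self-contained via the same descent idea, and for \eqref{no sections}$\Rightarrow$\eqref{splits} you split cases: for $r>0$ you invoke Bogomolov to obtain a Harder--Narasimhan sub-line-bundle $\cO_{\PP^2}(t)$ and pin down $t=0$ from the length formula $\mathrm{length}(W)=t(t+r)$ together with $t\le0$ from \eqref{no sections}; for $r=0$ you use Riemann--Roch and semistability. For the last assertion you restrict the HN sub-line-bundle (noting that the length formula alone already forces $t\ge0$) to every line. What you gain is a self-contained proof that avoids the Elencwajg--Forster citation and a treatment of the closing statement that is logically independent of the equivalence of \eqref{splits}--\eqref{one line}; what the paper gains is a more elementary argument that avoids Bogomolov's inequality entirely and handles all $r\ge0$ uniformly.
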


\begin{proof}
  Condition \eqref{splits} clearly implies \eqref{no sections}.
  The equivalence of \eqref{splits} and
  \eqref{one line} is proved in \cite{elencwajg-forster}.
  So it only remains to show that \eqref{no sections} implies
  \eqref{splits}, which we will now do.
  
  Let $t$ be the smallest integer such that $\HH^0(\PP^2,E(t)) \ne 0$.
  By \eqref{no sections} we know $t \ge 0$.
  Also, it is well-known that any non-zero global section $s$ of $E(t)$
  vanishes along a subscheme $W$ of $\PP^2$ of codimension $\ge 2$ and
  of length:
  \begin{equation}
    \label{c2}
  c_2(E(t))=t(t-r) \ge 0.
  \end{equation}
  We have an exact sequence:
  \[
  0 \to \cO_{\PP^2} \xr{s} E(t) \to \cI_W(2t-r) \to 0.
  \]
  So $t=0$ would imply $X = \emptyset$ hence $\cI_W(2t-r) \simeq
  \cO_{\PP^2}(-r)$ and $E$ splits as $\cO_{\PP^2} \oplus \cO_{\PP^2}(-r)$ since
  $\Ext^1_{\PP^2}(\cO_{\PP^2}(-r),\cO_{\PP^2})=0$.

  Then, it remains to rule out the case $t>0$.
  Hence, we assume $t>0$ i.e. $\HH^0(\PP^2,E)=0$, and we look for a contradiction.
  By Riemann-Roch, the Euler characteristic $\chi(E)$ is positive,
  hence $\HH^2(\PP^2,E) \ne 0$, so $\HH^0(\PP^2,E(r-3)) \ne 0$ by
  Serre duality, indeed $E^* \simeq E(r)$. 
  Therefore $t>0$ implies $t \le r-3$.
  But by \eqref{c2}, $t>0$ implies $t \ge r$, a contradiction.

  Let us now prove the last statement.
  Given a line $H$ of $\PP^2$, we have $E_{|H} \simeq \cO_H(s)
  \oplus \cO_H(-r-s)$ for some integer $s$, and we have to check
  that $s$ is non-negative.
  Let us assume $s<0$, and show that this leads to a contradiction.
  First, note that we may assume $s>-r$, for otherwise posing
  $s'=-r-s$ we have $s' \ge 0$ and we still have 
  $E_{|H} \simeq \cO_H(s')
  \oplus \cO_H(-r-s')$.
 
  Now, in case $-r<s<0$, we have an unstable section, namely $\HH^0(\PP^2,E(-1)) \ne 0$ since $E$ does
  not decompose as $\cO_{\PP^2} \oplus \cO_{\PP^2}(-r)$ (by the part
  we have already proved of this lemma).
  For all integers $t$, the exact sequence of restriction of $E(t)$ to
  $H$ reads:
  \[
  0 \to E(t-1) \to E(t) \to \cO_H(t+s) \oplus \cO_H(t-r-s) \to 0.
  \]
  So $-r<s<0$ implies $\HH^0(\PP^2,E(t-1)) \simeq \HH^0(\PP^2,E(t))$ for all $t \le
  0$, and this space is zero for $t \ll 0$.
  But this contradicts $\HH^0(\PP^2,E(-1)) \ne 0$.
\end{proof}

We will now prove our theorem.
We call a
line $L\subset \PD^2$ a {\it $h$-secant line to $Z$} if $|L \cap Z|
\ge h$.
We add the adjective {\it strict} if we require equality.
Recall that for all $h$, the number
$b_{\cA_Z,h}$ is the number of strict $h$-secants to $Z$.

\begin{proof}[Proof of Theorem \ref{concurrent}]
  One direction if obvious.
  What we have to prove is that the condition on Chern classes is
  sufficient, so we assume $c_2(\cT_{\PP^2}(-\log D_\cA))=k(k+r)$.
  Let $Z$ be the set of $m$ points of $\PD^2$ corresponding to
  $\cA$, so that $\cA=\cA_Z$ and $\cT_{\PP^2}(-\log D_\cA)=\TZ$.
  Since $\cA$ has a point $x$ of multiplicity $h\ge k$,
  on the dual side there is a line $L = L_x\subset \PD^2$ that contains $h$
  points of $Z$ (i.e. $L$ is a strict $h$-secant to $Z$), and leaves
  out the remaining $m-h$ points of 
  $Z$.
  Set $Z'= Z \setminus L$. Let $g$ be an equation of $L$ in $\PD^2$.

  Restricting the ideal sheaf $\cI_Z$ to $L$ we get the ideal
  sheaf of $h$ points in $\PP^1$, i.e. $\cO_L(-h)$. This gives an exact sequence:
  \begin{equation}
    \label{reduction}
    0 \to \cI_{Z'} \xr{g} \cI_Z(1) \to \cO_L(1-h) \to 0. 
  \end{equation}
  We apply $p_* \circ q^*$ to this exact sequence.
  It is easy to see that $p_*(q^*(\cI_{Z'}))$ only depends on the
  length of $Z'$ and is isomorphic to $\cO_{\PP^2}(h-m)$.
  Similarly, it is not hard to check (where in the second formula
  we replace the RHS by zero for $h \le 2$):
  \begin{align*}
    & p_*(q^*(\cO_L(1-h))) \simeq \cO_{\PP^2}(1-h), \\  
    & \rR^1p_*(q^*(\cO_L(1-h))) \simeq \cO_{\langle x^{h-2}\rangle},
  \end{align*}
  (recall that $\langle x^i\rangle$ denotes the $(i-1)^{\mathrm{th}}$
  infinitesimal neighborhood of $x$ in $\PP^2$).

  Therefore $p_* \circ q^*$ of \eqref{reduction} gives:
  \begin{align}
  \label{long} 0 \to & \cO_{\PP^2}(h-m) \to \TZ \xr{\alpha} \cO_{\PP^2}(1-h) \to \\   
  \nonumber \to & \rR^1p_*(q^*(\cI_{Z'})) \to\rR^1p_*(q^*(\cI_{Z}(1))) \to 
  \cO_{\langle x^{h-2}\rangle} \to 0.
 \end{align}
  Now, an argument similar to Claim \ref{R1} shows that
  $\rR^1p_*(q^*(\cI_{Z'}))$ is supported at points $x$ such that 
  $\HH^1(L_x,\cI_{Z' \cap  L_x}) \ne 0$, i.e. such that $|L_x \cap Z'|
  \ge 2$ (bisecant lines to $Z'$).
 The image of the map $\alpha$ above is then a sub-sheaf of
 $\cO_{\PP^2}(1-h)$, whose first Chern class is $1-h$ since all the
 sheaves in the second row of \eqref{long} are supported in
 codimension $\ge 2$.
 This means that $\im(\alpha) \simeq \cI_\Gamma(1-h)$, for some finite
 length subscheme $\Gamma \subset \PP^2$, and we have:
 \begin{equation}
   \label{Gamma}
   0 \to \cO_{\PP^2}(h-m) \to \TZ \to \cI_\Gamma(1-h)\to 0.
 \end{equation}
 This subscheme parametrizes bisecant lines to $Z'$ that meet $L$ away
 from $Z$.

 We apply now Lemma \ref{splitting}. If, by contradiction, the bundle
 $\TZ \otimes \cO_{\PP^2}(k)$ did not split as $\cO_{\PP^2} \oplus
 \cO_{\PP^2}(-r)$, then we would have an unstable section, namely:
 \[
 \HH^0(\PP^2,\TZ \otimes \cO_{\PP^2}(k-1)) \ne 0.
 \]

 Note that the assumption $h \le k+r+1=m-k$ gives $h+k-m-1<0$, so we have
  the vanishing $\HH^0(\PP^2,\cO_{\PP^2}(h+k-m-1))=0$.
 So, from \eqref{Gamma}, twisted by
 $\cO_{\PP^2}(k-1)$, we deduce:
 \[
 \HH^0(\PP^2, \cI_\Gamma(k-h)) \ne 0,
 \]
 hence clearly $k\ge h$, which implies $h=k$ so $\HH^0(\PP^2,
 \cI_\Gamma) \ne 0$. This says that $\Gamma$ is empty.
 But computing Chern classes in \eqref{Gamma} twisted by
 $\cO_{\PP^2}(k-1)$ (and still with $h=k$) shows
 that $\Gamma$ has length $c_2(\cI_\Gamma)=r+1$, a contradiction.
\end{proof}

\begin{proof}[Proof of Proposition \ref{facile}]
Again, we let $Z$ be the set of $m$ points of $\PD^2$ corresponding to
$\cA$, so that $\cA=\cA_Z$ and $\cT_{\PP^2}(-\log D_\cA)=\TZ$.
  Since $\cA$ has a point $x$ of multiplicity $k+1$,
  on the dual side there is a line $L = L_x\subset \PD^2$ that contains $k+1$
  points of $Z$ (i.e. $L$ is a strict $k+1$-secant to $Z$), and leaves
  out the remaining $m-k-1$ points of 
  $Z$.
  Set $Z'= Z \setminus L$. 
  We have $\cA'=\cA_{Z'}$.
  We can then rewrite \eqref{long} as:
  \begin{align}
  \label{long-2} 0 \to & \cO_{\PP^2}(k-m+1) \to \TZ \to \cO_{\PP^2}(-k) \to \\
   \nonumber \to & \rR^1p_*(q^*(\cI_{Z'})) \to\rR^1p_*(q^*(\cI_{Z}(1))) \to 
  \cO_{\langle x^{k-1}\rangle} \to 0.
  \end{align}
  
  Now, \eqref{deux} is equivalent to the fact that, for any $h \ge 2$, any strict
  $h$-secant to $Z'$ is $(h+1)$-secant to $Z$.
  By the interpretation we gave in previous proof of the higher direct
  images appearing in \eqref{long-2}, this is equivalent to exactness
  of the sequence:
  \[ 0 \to \rR^1p_*(q^*(\cI_{Z'})) \to\rR^1p_*(q^*(\cI_{Z}(1))) \to
  \cO_{\langle x^{k-1}\rangle} \to 0.\]
  By \eqref{long-2}, this is equivalent to exactness of:
  \[
  0 \to \cO_{\PP^2}(k-m+1) \to \TZ \to \cO_{\PP^2}(-k) \to 0.
  \]
  This is clearly equivalent to \eqref{un}.
\end{proof}

\section{Blowing up of the dual plane and restriction to lines}
\label{blow up}

Let $H=H_y$ be a line in $\PP^2$.
Given a finite set of $m$ points $Z$ in $\PD^n$, the associated
sheaf $\TZ$ restricts to $H_y$ as: 
\[
(\TZ)_{|H_y} \simeq \cO_{H_y}(-a_y) \oplus \cO_{H_y}(-b_y),
\]
for some integers $a_y\le b_y$ with $a_y+b_y=m-1$.
Let us work out the dual picture.

\begin{dfn}
Let $y \in \PD^2$ and let $Z$ be a finite set of points of $\PD^2$.
We define $d_{Z,y}$ as the smallest positive integer $d$ such that
{\it there is a curve in $\PD^2$ of degree $d+1$ passing through $Z$ and
  having multiplicity $d$ at $y$}.
Equivalently, $d_{Z,y}$ is the smallest integer $d$ such that:
\[
\HH^0(\PD^2,\cI^d_{y} \ts \cI_Z(d+1)) \ne 0.
\]
We also define $d_Z$ as $\max_{y \in \PD^2} d_{Z,y}$.
\end{dfn}

\begin{THM} \label{generic}
  Let $Z$ be a finite set of points of $\PD^2$ and $y\in \PD^2
  \setminus Z$. If $y$ lies on no trisecant line to $Z$, then $a_y=d_{Z,y}$.
\end{THM}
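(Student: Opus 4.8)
The plan is to transport the question to the dual plane by blowing up $\PD^2$ at the centre $y$. First I would restrict the incidence variety to $H_y$: set $\F_y = p^{-1}(H_y) \subset \F$, with $\rho = p|_{\F_y}\colon \F_y \to H_y$ and $\sigma = q|_{\F_y}\colon \F_y \to \PD^2$. Since $x \in H_y \Leftrightarrow y \in L_x$, the fibre of $\sigma$ over a point $y'\ne y$ is the single point $H_{y'}\cap H_y$, while the fibre over $y$ is the whole line $\{(x,y)\mid x\in H_y\}$; thus $\sigma$ realises $\F_y$ as the blow-up $S$ of $\PD^2$ at $y$, with exceptional divisor $E\simeq H_y$, and $\rho$ is the fibration whose fibres are the strict transforms of the pencil of lines of $\PD^2$ through $y$. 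As $y\notin Z$, the preimage of $Z$ in $S$ is disjoint from $E$ and I identify it with $Z$.

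Next I would identify $(\TZ)_{|H_y}$ with $\rho_*(\sigma^*(\cI_Z(1)))$. By Theorem \ref{origine}, $\TZ = p_*(q^*(\cI_Z(1)))$, and I compare $(\TZ)_{|H_y}$ with $\rho_*(\sigma^*(\cI_Z(1)))$ by base change along $H_y\mono\PP^2$. Two facts feed this. On one side, $q^*(\cI_Z(1))$ is torsion-free on $\F$, so its derived restriction to the Cartier divisor $\F_y=p^*(H_y)$ carries no higher $\mathrm{Tor}$ and equals $\sigma^*(\cI_Z(1))$. On the other, by Claim \ref{R1} the sheaf $\rR^1p_*(q^*(\cI_Z(1)))$ is supported at the points $x$ for which $L_x$ is a trisecant to $Z$; the hypothesis that \emph{$y$ lies on no trisecant} says exactly that no such $x$ lies on $H_y$ (again because $x\in H_y\Leftrightarrow y\in L_x$), so this higher direct image vanishes along $H_y$. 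Together these give $(\TZ)_{|H_y}\simeq\rho_*(\sigma^*(\cI_Z(1)))$. I would also check that the right-hand side is locally free of rank $2$: on a fibre meeting $Z$ in $s\le 2$ points the restriction $\sigma^*(\cI_Z(1))$ picks up torsion of length $s$ with locally free quotient $\cO(1-s)$, so the fibrewise $\HH^0$ is constantly $s+\max(0,2-s)=2$ and the fibrewise $\HH^1$ vanishes, whence $\rR^1\rho_*=0$.

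The computation of $a_y$ then becomes a statement about plane curves. By the projection formula and the blow-up dictionary $\sigma^*(\cO_{\PD^2}(1))=\cO_S(h)$, $\rho^*(\cO_{H_y}(1))=\cO_S(h-E)$, so that $\sigma^*(\cI_Z(1))\ts\rho^*(\cO_{H_y}(t))=\cI_Z((t+1)h-tE)$, I obtain for every $t\ge 0$
\[
\HH^0(H_y,(\TZ)_{|H_y}(t))\simeq \HH^0(S,\cI_Z((t+1)h-tE))\simeq \HH^0(\PD^2,\cI_y^{\,t}\ts\cI_Z(t+1)),
\]
the last space being the curves of degree $t+1$ through $Z$ with multiplicity $\ge t$ at $y$. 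Since $(\TZ)_{|H_y}\simeq\cO_{H_y}(-a_y)\oplus\cO_{H_y}(-b_y)$ with $a_y\le b_y$, the integer $a_y$ is the least $t$ making the left-hand space nonzero, while $d_{Z,y}$ is by definition the least $d\ge 1$ making the right-hand space nonzero. Provided $Z$ is not contained in a line—otherwise $\cA_Z$ is a pencil, a case one excludes—the $t=0$ term $\HH^0(\PD^2,\cI_Z(1))$ vanishes, so both minima are attained at the same integer and $a_y=d_{Z,y}$.

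The main obstacle is the middle step. Because $q^*(\cI_Z(1))$ fails to be flat over $\PP^2$ along $H_y$ (its fibrewise length jumps at each point $H_z\cap H_y$), Grauert's theorem does not apply directly, and the naive fibrewise $\HH^0$ of $\TZ$ is not even constant. The precise role of the no-trisecant hypothesis is to confine $\rR^1p_*$ to points off $H_y$; combined with the torsion-freeness of $q^*(\cI_Z(1))$, which removes the derived correction in the restriction to $\F_y$, this is what legitimises the base-change isomorphism. Verifying the constancy of the fibrewise $\HH^0$ via the torsion-compensation noted above is the one genuinely computational point I expect to have to spell out carefully.
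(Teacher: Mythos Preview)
Your approach is correct and follows essentially the same route as the paper: both identify $p^{-1}(H_y)$ with the blow-up of $\PD^2$ at $y$, establish $(\TZ)_{|H_y}\simeq \rho_*\sigma^*(\cI_Z(1))$ under the no-trisecant hypothesis, and then translate the splitting type into statements about $\HH^0(\PD^2,\cI_y^{\,t}\ts\cI_Z(t+1))$.

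The packaging is slightly different. The paper tensors the short exact sequence $0\to p^*\cO_{\PP^2}(-1)\to\cO_\F\to\cO_{\tilde\PP}\to 0$ with $q^*\cI_Z(1)$ and pushes forward, obtaining a long exact sequence with an explicit finite-length cokernel $\boldsymbol{\tau}_{Z,y}$ whose vanishing is read off from $t_{Z,y}=0$; it then proves the two inequalities $d_{Z,y}\le a_y$ and $d_{Z,y}\ge a_y$ separately by pulling back and pushing forward along $\tilde p$ and $\tilde q$. You instead invoke derived base change (legitimised by Tor-independence of the square, since $p$ is flat, together with torsion-freeness of $q^*\cI_Z(1)$ and the fact that $\rR^1p_*$ has support disjoint from $H_y$), and then compute $\HH^0(H_y,(\TZ)_{|H_y}(t))\simeq\HH^0(\PD^2,\cI_y^{\,t}\ts\cI_Z(t+1))$ for all $t\ge 0$ in one stroke via the projection formula and $\sigma_*\cO_S(-tE)=\cI_y^{\,t}$. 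Your version is arguably cleaner and more transparent about why the base change works; the paper's version is more hands-on but amounts to the same thing. Your remark about the degenerate case $Z$ collinear (forcing $a_y=0$ while $d_{Z,y}\ge 1$ by definition) is a genuine edge case that the paper's proof also leaves implicit.
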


\begin{proof}
  We will first outline the application to our situation of the so-called {\it standard construction}, see \cite{okonek-schneider-spindler}.
  We consider the blow-up $\tilde{\PP}$ of $\PD^2$ at the point $y$,
  and we recall that $\tilde{\PP}=p^{-1}(H_y)$, where $p$ is the
  projection map from the flag $\F$ to $\PP^2$. 
  Therefore we have an exact sequence:
  \[
  0 \to p^*(\cO_{\PP^2}(-1)) \to \cO_\F \to \cO_{\tilde{\PP}} \to 0,
  \]
  with $\cO_{\tilde{\PP}} \simeq p^*(\cO_{H_y})$.
  We denote by $\tilde p$ and $\tilde q$ the induced projections
  from $\tilde \PP$ to $H_y$ and to $\PD^2$.
  Tensoring the above exact sequence by $q^*(\cI_Z(1))$ and taking
  direct image by $p$ we get the long exact sequence:
  \begin{align*}
  0 \to & \cT_Z(-1) \xr{f_y} \cT_Z \to \tilde p_* (\tilde q^*(\cI_Z(1))) \to \\
  \to & \rR^1p_*(q^*(\cI_{Z}(1))) \xr{f_y} \rR^1p_*(q^*(\cI_{Z}(1))) \to 
  \rR^1\tilde p_*(\tilde q^*(\cI_{Z}(1))) \to 0,
  \end{align*}
  where here $f_y$ is an equation of $H_y$ in $\PP^2$.
  Note that, in the second row of the above diagram, all sheaves have
  finite length by Claim \ref{R1}, and the kernel of the instance of
  $f_y$ in this row has the same length as its cokernel.
  Further note that, by base change over $x \in H_y$, we have:
  \[
  \rR^1\tilde p_*(\tilde q^*(\cI_{Z}(1))) \ts \kk_x \simeq
  \HH^1(L_x,\cI_{Z \cap L_x}(1)),
  \]
  where $\kk_x$ is the residue field at $x$.
  Therefore, looking at the line $L_x$ in $\PD^2$ (which passes
  through $y$),
  the number of points in the intersection $L_x \cap Z$  equals 
  $\dim(\rR^1\tilde p_*(\tilde q^*(\cI_{Z}(1))) \ts \kk_x)+2$
  (it is understood that if $|L_x \cap Z| \le 2$ then this vector space
  is zero).
  Summing up, we have an exact commutative diagram:
\begin{center}
\begin{tikzpicture}[scale=2.0]
\node (0primo) at (-4,0) {$0$} ;
\node (TZmeno) at (-3,0) {$\TZ(-1)$} ;
\node (TZ) at (-2,0) {$\TZ$} ;
\node (tilde) at (-0.4,0) {$\tilde p_* (\tilde q^*(\cI_Z(1)))$} ;
\node (tau) at (1,0) {$\boldsymbol{\tau}_{Z,y}$};
\node (0secondo) at (2,0) {$0$} ;
\node (destro) at (-0.5,-1) {$0$} ;
\node (sinistro) at (-2,-1) {$0$} ;
\node (immagine) at (-1.2,-0.5) {$(\cT_Z)_{|H_y}$} ;
\draw[->,>=stealth] (0primo) to (TZmeno);
\draw[->,>=stealth] (TZmeno) -- (TZ) node[midway,above] {$f_y$};
\draw[->,>=stealth] (TZ) to (tilde);
\draw[->,>=stealth] (tilde) to (tau);
\draw[->,>=stealth] (tau) to (0secondo);
\draw[->,>=stealth] (TZ) to (immagine);
\draw[->,>=stealth] (sinistro) to (immagine);
\draw[->,>=stealth] (immagine) to (tilde);
\draw[->,>=stealth] (immagine) to (destro);
\end{tikzpicture}
\end{center}
  Here, the sheaf $\boldsymbol{\tau}_{Z,y}$ has length $t_{Z,y}$.
  In particular, if $t_{Z,y} = 0$ then:
  \[
  \tilde p_* (\tilde q^*(\cI_Z(1))) \simeq (\cT_Z)_{|H_y}.
  \]

  Let us now head to the proof of our theorem. Set $a=a_y$, $b=b_y$.
  The decomposition $(\TZ)_{|H_y} \simeq \cO_{H_y}(-a) \oplus
  \cO_{H_y}(-b)$ gives an injective map $\cO_{H_y}(-a) \to \tilde p_* (\tilde q^*(\cI_Z(1)))$.
  Pulling back to $\tilde \PP$, since $\tilde p^*(\cO_{H_y}) \simeq
  \cO_{\tilde \PP}$, we get an injection:
  \[
   \cO_{\tilde \PP} \mono \tilde
  q^*(\cI_Z(1)) \ts \tilde p^*(\cO_{H_y}(a)).
  \]
  Pushing down to $\PD^2$, we get a map:
  \[
   \cO_{\PD^2} \mono \cI_Z(1) \ts \tilde q_*(\tilde p^*(\cO_{H_y}(a))) \simeq
   \cI_Z(1) \ts \cI^a_y(a).
  \]
  
  Therefore we get $d_{Z,y} \ge a$.
  To check the opposite inequality, we show
  $\HH^0(\PD^2,\cI^{a-1}_{y} \ts \cI_Z(a)) = 0$.
  A non-zero element of this space, once pulled-back to $\PD^2$, would give a 
  map $\cO_{\tilde \PP^2} \mono \cI_Z(a)$, vanishing with multiplicity
  $a-1$ along the exceptional divisor $\tilde q^{-1}(y)$.
  By clearing $a-1$ times the equation of $\tilde q^{-1}(y)$, we get a map:
  \[
   \cO_{\tilde \PP} \mono \tilde
  q^*(\cI_Z(1)) \ts \tilde p^*(\cO_{H_y}(a-1)),
  \]
  hence, by pushing forward to $H_y$ via $\tilde p$, a map:
  \[
  \cO_{H_y}(1-a) \mono \tilde p_* (\tilde q^*(\cI_Z(1))).
  \]
  This is incompatible with the splitting $(\TZ)_{|H_y} \simeq \cO_{H_y}(-a) \oplus
  \cO_{H_y}(-b)$, with $a\le b$.
\end{proof}

Putting together the previous theorem and Lemma \ref{splitting},
we get the following result, somehow related to Yoshinaga's
theorem, cf. \cite{yoshinaga:free}.

\begin{corol} \label{res to line}
  Let $k \ge 1$, $r\ge 0$ be integers, set $m=2k+r+1$, and consider
  a line arrangement $\cA_Z$ associated with $m$ points $Z$ in $\PD^2$
  having $c_2(\TZ)=k(k+r)$.
  Then the following are equivalent:
  \begin{enumerate}[i)]
  \item the arrangement $\cA_Z$ is free with exponents $(k,k+r)$;
  \item there is a line $H=H_y$ in $\PP^2$ such that $(\TZ)_{|H_y} \simeq \cO_{H_y}(-k) \oplus \cO_{H_y}(-k-r)$; 
  \item for a point $y \in \PD^2$ lying in no trisecant line to $Z$, we have $d_{Z,y}=k$;
  \item we have $d_Z = k$.
  \end{enumerate}
  In particular, if $Z$ has a $h$-secant line with $h \ge k+r+2$, then
  $\cA_Z$ cannot be free.
\end{corol}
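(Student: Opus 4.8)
The plan is to normalise the bundle and then read off each of the four conditions through Lemma~\ref{splitting} and Theorem~\ref{generic}. Set $E = \TZ \ts \cO_{\PP^2}(k)$. Since $c_1(\TZ) = 1-m = -(2k+r)$, a direct Chern class computation using the hypothesis $c_2(\TZ) = k(k+r)$ gives $c_1(E) = -r$ and $c_2(E) = 0$, so $E$ satisfies the hypotheses of Lemma~\ref{splitting}. Condition (i) is exactly ``$E \simeq \cO_{\PP^2} \oplus \cO_{\PP^2}(-r)$'' and condition (ii) is exactly ``$E_{|H_y} \simeq \cO_{H_y} \oplus \cO_{H_y}(-r)$ for some line $H_y$'', so the equivalence of (i) and (ii) is precisely the equivalence of \eqref{splits} and \eqref{one line} in Lemma~\ref{splitting}. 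I would also record, from the last assertion of that lemma applied to $E$, that $E_{|H_y} \simeq \cO_{H_y}(s) \oplus \cO_{H_y}(-r-s)$ with $s \ge 0$ for \emph{every} line, which translates into $a_y \le k$ for all $y$, with $a_y = k$ precisely when $E_{|H_y}$ splits as $\cO_{H_y}\oplus\cO_{H_y}(-r)$.

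Next I would bring in Theorem~\ref{generic} to handle (iii). For $y$ on no trisecant line to $Z$ we have $d_{Z,y} = a_y$. If $\cA_Z$ is free with exponents $(k,k+r)$ then $(\TZ)_{|H_y} \simeq \cO_{H_y}(-k) \oplus \cO_{H_y}(-k-r)$ for every $y$, so $a_y = k$, giving $d_{Z,y} = k$ for every $y$ off the (finitely many) trisecants; this is (i) $\Rightarrow$ (iii). Conversely, if $d_{Z,y} = k$ for one point $y$ on no trisecant, then $a_y = k$, i.e. $E_{|H_y}\simeq \cO_{H_y}\oplus\cO_{H_y}(-r)$, which is condition (ii); hence (iii) $\Rightarrow$ (ii) $\Rightarrow$ (i). Since $a_y \le k$ always and a single line with $a_y = k$ already forces $E$ to split by Lemma~\ref{splitting}, the various possible readings of (iii) (for some, for a general, or for all $y$ off the trisecants) coincide, which is worth a remark.

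The step I expect to cost the most is the equivalence of (iv) with the others, because $d_Z = \max_{y} d_{Z,y}$ ranges over all $y$, including those on trisecants where Theorem~\ref{generic} does not apply directly. The plan is to show that this maximum is attained at a general point. For each fixed $d$, the dimension of $\HH^0(\PD^2, \cI^d_y \ts \cI_Z(d+1))$ is upper semicontinuous in $y$, since imposing multiplicity $d$ at $y$ is a linear condition on $\HH^0(\PD^2,\cI_Z(d+1))$ whose rank can only drop on a closed subset; hence $d_{Z,y}$, the least $d$ for which this group is nonzero, can only drop on special $y$ and is therefore maximal for general $y$. A general $y$ avoids the finitely many trisecants, so $d_Z = d_{Z,y} = a_y$ for such $y$, which is exactly the quantity appearing in (iii). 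Thus (iv) $\Leftrightarrow$ (iii), closing the cycle of equivalences.

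For the closing assertion I would argue by contradiction using the sequence \eqref{long} from the proof of Theorem~\ref{concurrent}: an $h$-secant line to $Z$ produces the sub-line-bundle $\cO_{\PP^2}(h-m) \mono \TZ$ appearing in \eqref{Gamma}. If $\cA_Z$ were free with exponents $(k,k+r)$, then $\TZ \simeq \cO_{\PP^2}(-k) \oplus \cO_{\PP^2}(-k-r)$, whose sub-line-bundles have degree at most $-k$; this forces $h - m \le -k$, i.e. $h \le m - k = k+r+1$. Hence an $h$-secant with $h \ge k+r+2$ is incompatible with freeness, as claimed.
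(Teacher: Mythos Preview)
Your treatment of the equivalences (i)--(iv) is essentially the paper's: both rely on Lemma~\ref{splitting} for (i)$\Leftrightarrow$(ii), Theorem~\ref{generic} for the link with (iii), and upper semicontinuity of $y\mapsto\dim_\kk\HH^0(\PD^2,\cI_y^d\otimes\cI_Z(d+1))$ to see that the maximum $d_Z$ is attained at a general point, giving (iii)$\Leftrightarrow$(iv).

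For the closing assertion you diverge from the paper. The paper argues geometrically via condition~(iv): given a strict $h$-secant line $L$ with $h\ge k+r+2$ and a general point $y\in\PD^2$, the union of $L$ with the $m-h$ lines joining $y$ to the points of $Z\setminus L$ is a curve of degree $m-h+1\le k$ through $Z$ with multiplicity $m-h$ at $y$; hence $d_{Z,y}\le m-h\le k-1$, so $d_Z<k$ and the arrangement is not free by the equivalence just proved. Your approach instead extracts the sub-line-bundle $\cO_{\PP^2}(h-m)\hookrightarrow\TZ$ from \eqref{long}--\eqref{Gamma} (whose derivation in the proof of Theorem~\ref{concurrent} does not actually use the bounds on $h$) and bounds its degree against the summands of a hypothetically split $\TZ$; this is precisely the argument the paper deploys later for part~\eqref{no higher} of the lemma in Section~\ref{up to 12}. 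Both are correct: the paper's argument stays internal to the corollary and illustrates the geometric content of $d_{Z,y}$, while yours is shorter and purely sheaf-theoretic.
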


\begin{proof}
The only two observations needed for the proof of the equivalence of
the first four statements, besides Lemma
\ref{splitting} and Theorem \ref{generic}, is
the fact the maximal value
$d_Z = k$ among the $d_{Z,y}$'s is attained at a generic point $y \in \PD^2$.
This in turn is clear by upper semicontinuity of the function $y \mapsto \dim_{\kk}
\HH^0(\PD^2,\cI_{y}^d \ts \cI_Z(d+1))$.

For the last statement we argue as follows.
Suppose that $Z$ has a strict $h$-secant line $L$ with $h \ge k+r+2$
and take a general point $y$ in $\PD^2$.
Then we have a curve in $\PD^2$ of degree $m-h+1 \le k$ through $Z$ and having
multiplicity $m-h$ at $y$, namely the union of $L$ and of the $m-h$
lines joining $y$ with the $m-h$ points of $Z \setminus L$.
Therefore $d_{Z,y} \le k-1$ so $\cA_Z$ is not free by the previous statements of this corollary.
\end{proof}
\section{Two double points aligned with
  a point of high multiplicity} 

\label{ungar}

For this section only, we consider {\it real} arrangements, namely we
let $\kk=\R$.
Let $k \ge 1$, $r\ge 0$ be integers, and set $m=2k+r+1$.
We consider here line arrangements $\cA$ of $m$ lines having:
\begin{itemize}
\item a point $x_0$ of multiplicity $k-1$.
\item two points $x_1$, $x_2$, each of multiplicity at least $2$ in
  $\cA$, such that $\{x_0,x_1,x_2\}$ is contained in a line $H$ which
  is not in $\cA$.
\end{itemize}

\begin{THM} \label{usa ungar}
  Let $\cA$ be as above. Then $\cA$ is free with exponents $(k,k+r)$
  if and only if $c_2(\cT_{\PP^2}(-\log D_\cA))=k(k+r)$.
\end{THM}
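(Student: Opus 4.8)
The plan is to prove the nontrivial implication (that the Chern class condition forces freeness) by contradiction, reducing the problem to a statement about the number of directions determined by the real point set $Z'$ and then invoking Ungar's theorem; the converse implication is immediate. As usual let $Z \subset \PD^2$ be the points dual to $\cA$, and set $E = \TZ(k)$. Since $m = 2k+r+1$ and $c_2(\TZ) = k(k+r)$, a direct computation gives $c_1(E) = -r$ and $c_2(E) = 0$, so Lemma \ref{splitting} applies to $E$. Freeness with exponents $(k,k+r)$ is equivalent to the splitting $E \simeq \cO_{\PP^2}\oplus\cO_{\PP^2}(-r)$, hence to the vanishing $\HH^0(\PP^2,\TZ(k-1)) = 0$. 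I therefore assume, for contradiction, that an unstable section $\sigma \in \HH^0(\PP^2,\TZ(k-1))$, $\sigma\ne 0$, exists.

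Next I would exploit the point $x_0$ of multiplicity $k-1$. Dually, $L = L_{x_0}$ is a strict $(k-1)$-secant to $Z$, and running the argument of \eqref{reduction}--\eqref{Gamma} with $h = k-1$ produces an exact sequence $0 \to \cO_{\PP^2}(k-1-m) \to \TZ \to \cI_\Gamma(2-k) \to 0$, where $\Gamma$ parametrises the bisecants to $Z' = Z \setminus L$ meeting $L$ away from $Z$; computing Chern classes gives that $\Gamma$ has finite length $2r+4$, in particular $\Gamma\ne\emptyset$. Twisting by $\cO_{\PP^2}(k-1)$ turns the right-hand term into $\cI_\Gamma(1)$ and the left-hand term into $\cO_{\PP^2}(-r-3)$, whose $\HH^0$ vanishes. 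Hence $\sigma$ maps to a nonzero element of $\HH^0(\PP^2,\cI_\Gamma(1))$, that is, $\Gamma$ lies on a line $M \subset \PP^2$.

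The heart of the argument is the dual translation of the special configuration. The hypotheses say $y \notin Z$ (since $H_y \notin \cA$), that $y \in L$ (since $x_0 \in H_y$), and that $L_1 = L_{x_1}$ and $L_2 = L_{x_2}$ are two distinct bisecants to $Z$ through $y$; because $L_1,L_2 \ne L$ meet $L$ only at $y \notin Z$, each is in fact a bisecant to $Z'$ and contributes a point to $\Gamma$. Now $\Gamma \subset M$ forces these two distinct points onto $M$, which under projective duality pins down $M = L_y$ and shows that every bisecant recorded by $\Gamma$ passes through $y$. Consequently every bisecant to $Z'$ meets $L$ in the set $\{y\} \cup (Z \cap L)$, which has exactly $k$ points since $\mult(D_\cA,x_0)=k-1$.

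I would finish using the reality assumption. The existence of the two distinct bisecants $L_1 \ne L_2$ to $Z'$ already shows that $Z'$ is not collinear. Choosing coordinates so that $L$ is the line at infinity, the conclusion of the previous step reads: the $k+r+2$ real points of $Z'$ determine at most $k$ directions. But Ungar's theorem gives at least $2\lfloor |Z'|/2 \rfloor \ge k+r+1$ directions, and $k+r+1 > k$ since $r \ge 0$, a contradiction; hence $\sigma$ cannot exist and $\cA$ is free. The main obstacle I anticipate is making precise the scheme structure of $\Gamma$ together with the duality statement that ``$\Gamma$ lies on the line $M$'' is equivalent to ``the bisecants recorded by $\Gamma$ are concurrent at $M^\vee$''; once this concurrency is established, the passage to a direction count and the application of Ungar's theorem close the argument.
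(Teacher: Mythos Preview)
Your argument is correct and follows the paper's proof essentially verbatim: reduce along the $(k-1)$-secant $L$, use the unstable section to force $\Gamma$ into a line, identify that line as $H_y$ via the two points $x_1,x_2\in\Gamma$, and finish with Ungar's theorem on the $k+r+2$ points of $Z'$. The only discrepancies are cosmetic: your length $\ell(\Gamma)=2r+4$ is in fact the correct value (the paper's ``$2r+2$'' is a slip irrelevant to the logic), your ``$M=L_y$'' should read $H_y$ in the paper's notation, and the anticipated obstacle about scheme structure does not arise because only the support of $\Gamma$ is needed.
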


\begin{proof}
  Let $Z$ be the set of points in $\PP^2$ corresponding to $\cA$, so $\cA =
  \cA_Z$.
  We take a $(k-1)$-tuple point $x_0$ of 
  $\cA$, and let $H$ be the line in $\PP^2$ containing
  $x_0,x_1,x_2$, with $x_1,x_2$ of multiplicity $2$ or higher in $\cA$.
  Again we let $L=L_x$ be the $(k-1)$-secant to $Z$ in $\PD^2$, and
  set $Z' = Z \setminus L$, this time $Z'$ consists of $k+r+2$ points.
  We consider the exact sequence \eqref{reduction} (where this time
  $h=k-1$) and the
  long exact obtained by applying $p_* \circ q^*$ to it:
  \begin{align*}
   0 \to & \cO_{\PP^2}(-k-r-2) \to \TZ \xr{\alpha} \cO_{\PP^2}(2-k) \to \\   
  \nonumber \to & \rR^1p_*(q^*(\cI_{Z'})) \to\rR^1p_*(q^*(\cI_{Z}(1))) \to 
  \cO_{\langle x^{k-3}\rangle} \to 0.
 \end{align*}
 
 This time, the image of the map $\alpha$ above is $\cI_\Gamma(2-k)$,
 where $\Gamma$ has length $2r+2$, and the above sequence becomes:
 \begin{equation}
   \label{riscrivo con 3}
 0 \to \cO_{\PP^2}(-k-r-2) \to \TZ \to \cI_{\Gamma}(2-k) \to 0.   
 \end{equation}

 Let us assume now that $\TZ$ is not free, and show that this leads to
 a contradiction.
 First, by Lemma \ref{splitting} we
 have $\HH^0(\PP^2,\TZ(k-1)) \ne 0$, and we see from \eqref{riscrivo con
   3} that this gives $\HH^0(\PP^2,\cI_\Gamma(1)) \ne 0$, therefore $\Gamma$ is
 contained in a line $H$ of $\PP^2$.
 According to the interpretation  we gave in the proof of
 Theorem \ref{concurrent}, the subscheme $\Gamma$ parametrizes (with
 multiplicity) the set of bisecant lines to $Z'$ that meet $L$ away from $Z$.
 Since $x_1$ and $x_2$ give two such bisecants, we have that $H=H_w$.
Note that by assumption $w$ does not lie in $Z$.

Summing up, we have the set $Z'$ of $k+r+2$ points in $\PD^2$, and a
set $Z''$ of $k$ points in $L$, constituted by $w$ and $Z\cap L$, such that
any bisecant line to $Z'$ cuts $L$ along $Z''$.
If we let now $L$ be the line at infinity in $\PD^2$, we see that $Z'$ is a
set of $k+r+2$ points of an affine $2$-dimensional space,
that determines at most $k$ directions.
But, since we are working over $\R$, the set $Z'$ should determine at
least $k+r+1 \ge k+1$ directions, according to Ungar's theorem, see
\cite{ungar:2N}.
This is a contradiction.
\end{proof}

\section{Sub-arrangement obtained by deletion}

\label{Deletion}

A classical and useful technique in the theory of arrangements consists in
considering arrangements obtained from an arrangement $\cA$ by adding
a hyperplane out of $\cA$, or deleting one of $\cA$, or restricting
$\cA$ to a hyperplane of $\cA$ (see \cite{orlik-terao:arrangements}
for a comprehensive treatment).
Here we provide a different approach to this technique and outline
some considerations on freeness of line arrangements based on our approach.
Most of the results contained in this section are certainly known to
experts, and can be proved with the classical techniques of deletion.

\subsection{Deletion of a point and triple points along a hyperplane}

Let $Z$ be a finite set of points in $\PD^n$ and let $z\in Z$.
Set $Z' = Z \setminus \{z\}$.
We say that $\cA_{Z'}$ is a sub-arrangement of $\cA_Z$, obtained by
{\it deletion} of $z$.
We have the exact sequence:
\[
0 \to \cI_{Z} \to \cI_{Z'} \to \cO_z \to 0. 
\]
Applying $p_* \circ q^*$ to this sequence, we get:
  \begin{align*}
   0 \to \TZ \to \cT_{Z'} \xr{\beta_0} \cO_{H_z} \xr{\beta_1} \rR^1p_*(q^*(\cI_{Z}(1))) \xr{\beta_2}\rR^1p_*(q^*(\cI_{Z'}(1))) \to 0.
 \end{align*}

The kernel of the map $\beta_1$ above is a sub-sheaf of $\cO_{H_z}$,
which we refer to as the ideal of {\it triple points of $\cA_Z$ along $H_z$}.
For a point $y \in \PD^2$, we write:
\[
t_{Z,y} = \sum_{x \in H_y \cap \cS_Z} (\mult(D_{\cA_Z},x) -2).
\]

\begin{prop}
  We have a short exact sequence:
  \begin{equation}
    \label{deletion}
  0 \to \TZ \to \cT_{Z'} \to \cO_{H_z}(-t_{Z,z}) \to 0.    
  \end{equation}
\end{prop}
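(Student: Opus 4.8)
The plan is to read off the two ends of the displayed five-term sequence: it already identifies $\TZ$ with $\ker(\beta_0)$, so the only thing left to prove is that $\im(\beta_0)=\ker(\beta_1)$ is isomorphic to $\cO_{H_z}(-t_{Z,z})$, after which the asserted short exact sequence is obtained by replacing the target $\cO_{H_z}$ of $\beta_0$ by the image of $\beta_0$. Now $\ker(\beta_1)$ is a rank-one subsheaf of the structure sheaf of the line $H_z\simeq\PP^1$; being a subsheaf of finite colength of a torsion-free rank-one sheaf on a smooth rational curve, it is automatically the ideal sheaf of a finite subscheme $W\subset H_z$, hence of the form $\cO_{H_z}(-\deg W)$. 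Thus the whole statement reduces to the numerical claim that $\deg W$, which is the length of the quotient $\cO_{H_z}/\ker(\beta_1)\simeq\im(\beta_1)$, equals $t_{Z,z}$.

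To compute this length I would work locally on $H_z$. The sheaf $\im(\beta_1)$ is a quotient of $\cO_{H_z}$, hence supported on $H_z$, and by exactness it equals $\ker(\beta_2)$. Using the description of the higher direct images recorded after Theorem~\ref{concurrent}, both $\rR^1p_*(q^*(\cI_Z(1)))$ and $\rR^1p_*(q^*(\cI_{Z'}(1)))$ split as direct sums of the fat-point sheaves $\cO_{\langle x^{m-2}\rangle}$ running over the singular points $x$ of $\cS_Z$ (resp. $\cS_{Z'}$), with $m=\mult(D_{\cA_Z},x)$ (resp. the corresponding multiplicity for $Z'$). Since these summands sit at distinct points, the map $\beta_2$ decomposes as a direct sum of local maps, and $\ker(\beta_2)$, being supported on $H_z$, receives contributions only from the points $x\in H_z\cap\cS_Z$. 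For such an $x$, duality gives $z\in L_x$, so the line $L_x$ meets $Z$ in $m$ points and therefore meets $Z'=Z\setminus\{z\}$ in exactly $m-1$ points; hence the local summand of the first $\rR^1$ is $\cO_{\langle x^{m-2}\rangle}$, of length ${m-1 \choose 2}$, while that of the second is $\cO_{\langle x^{m-3}\rangle}$, of length ${m-2 \choose 2}$.

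The final step is the length count. The map $\beta_2$ is surjective, since the five-term sequence terminates at $0$; equivalently $\rR^1p_*(q^*(\cO_z(1)))=0$, because the fibre $q^{-1}(z)\simeq H_z$ maps isomorphically onto $H_z$ under $p$. Surjectivity of $\beta_2$ at each $x\in H_z\cap\cS_Z$ then forces $\mathrm{length}(\ker(\beta_2))_x={m-1 \choose 2}-{m-2 \choose 2}=m-2$, and summing over all such $x$ yields $\mathrm{length}(\im(\beta_1))=\sum_{x\in H_z\cap\cS_Z}(\mult(D_{\cA_Z},x)-2)=t_{Z,z}$, exactly as required. The main obstacle is the bookkeeping of the middle paragraph: one must verify that deleting $z$ lowers the multiplicity at every $x\in H_z\cap\cS_Z$ by exactly one and leaves every other singular point untouched, so that the two higher-direct-image sheaves differ, point by point, only by a single drop in the fat-point order; once this local picture and the surjectivity of $\beta_2$ are in place, the length computation is immediate.
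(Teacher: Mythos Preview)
Your proof is correct and follows essentially the same route as the paper's: both identify $\im(\beta_0)=\ker(\beta_1)$ as an ideal sheaf on $H_z$, compute the length of $\im(\beta_1)=\ker(\beta_2)$ via the fat-point decomposition of the higher direct images, and read off the degree $-t_{Z,z}$. Your version is in fact more explicit about the local length computation $\binom{m-1}{2}-\binom{m-2}{2}=m-2$ and about why $\beta_2$ is surjective, details the paper compresses into ``by computing multiplicities''.
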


\begin{proof}
  Given a point $x$ in $\PP^2$, 
  we denote again by $\langle x^i\rangle$ the $(i-1)^{\mathrm{th}}$
  infinitesimal neighborhood of $x$ in $\PP^2$.

  We have said that the sheaf
  $\rR^1p_*(q^*(\cI_{Z}(1)))$ is the direct sum of
  the $\cO_{\langle x_j^{m_j-2}\rangle}$, where the $x_j$'s vary in
  the support of $\cS_Z$ and $m_j= \mult(D_{\cA_Z},x_j)$.
  Therefore, the kernel of the map $\beta_2$ above 
  describes the difference between triple points of $\cA_Z$ and 
  triple points of $\cA_{Z'}$ each counted with multiplicity.
  By computing multiplicities, we get that the length of the support
  of $\ker(\beta_2)$ is precisely $t_{Z,z}$.
  Since $\ker(\beta_2) = \im(\beta_1)$ has length $t_{Z,z}$, we get
  that $\ker(\beta_1)=\im(\beta_0)$ has degree $-t_{Z,z}$, hence it is
  just $\cO_{H_z}(-t_{Z,z})$.
\end{proof}

\subsection{Some properties of freeness of line arrangements related
  to deletion}

Here we give some simple relations between freeness of a given
arrangements $\cA_Z$ and the numbers $t_{Z,z}$, for $z \in Z$.
Throughout the subsection, we let $k\ge 1$, $r\ge 0$ be integers, set
$m=2k+r+1$, and we consider a set $Z$ of $m$ points of $\PP^2$ and the
corresponding line arrangement $\cA_Z$.

\begin{prop}
  Assume $\cA_Z$ is free with exponents $(k,k+r)$, let $z \in Z$ and
  set $Z'=Z\setminus \{z\}$.
  Then, one of the 
  following alternatives takes place:
  \begin{enumerate}[i)]
  \item \label{minimo} $t_{Z,z}=k-1$ and $\cA_{Z'}$ is free with exponents $(k-1,k+r)$;
  \item \label{massimo} $t_{Z,z}=k+r-1$ and $\cA_{Z'}$ is free with exponents $(k,k+r-1)$;
  \item \label{tertio}
    $t_{Z,z} \ge k+r$ and $\cA_{Z'}$ is not
     free.
  \end{enumerate}
\end{prop}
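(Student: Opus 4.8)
The plan is to run everything off the deletion sequence \eqref{deletion}. Since $\cA_Z$ is free, $\TZ\simeq\cO_{\PP^2}(-k)\oplus\cO_{\PP^2}(-k-r)$, so writing $t=t_{Z,z}$ this sequence becomes
\[
0\to\cO_{\PP^2}(-k)\oplus\cO_{\PP^2}(-k-r)\to\cT_{Z'}\to\cO_{H_z}(-t)\to0,
\]
with $\cT_{Z'}$ a rank-$2$ bundle. Whitney's formula gives $c_2(\cT_{Z'})=k(k+r)-(2k+r-1)+t$, so the three listed exponent pairs are exactly those numerically compatible with $t=k-1$, with $t=k+r-1$, and with $t\ge k+r$. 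Hence I must establish two things: first, that $t$ is forced into the set $\{k-1\}\cup\{t\ge k+r-1\}$; and second, the precise splitting of $\cT_{Z'}$ on each piece of this range.

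The decisive step is the constraint on $t$, which I would obtain by dualizing. Since $\cT_{Z'}$ is locally free, applying $\cHom(-,\cO_{\PP^2})$ to the sequence above, together with $\mathcal{E}xt^1(\cO_{H_z}(-t),\cO_{\PP^2})\simeq\cO_{H_z}(t+1)$ and $\mathcal{E}xt^1(\cT_{Z'},\cO_{\PP^2})=0$, produces
\[
0\to\cT_{Z'}^{*}\to\cO_{\PP^2}(k)\oplus\cO_{\PP^2}(k+r)\xr{(\phi_1,\phi_2)}\cO_{H_z}(t+1)\to0.
\]
Restricted to $H_z\simeq\PP^1$ the components $\phi_1,\phi_2$ are forms of degrees $t+1-k$ and $t+1-k-r$, and as the map is surjective they have no common zero on $H_z$. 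Comparing degrees, this forces either $t\ge k+r-1$ (so that $\phi_2$ may be taken a nonzero form coprime to $\phi_1$) or $t=k-1$ (so that $\phi_2=0$ and $\phi_1$ is a nonvanishing constant); the ranges $t\le k-2$ (both forms would vanish) and $k\le t\le k+r-2$ (then $\phi_2=0$ while $\phi_1$ has positive degree, hence a zero) are excluded. I expect this degree bookkeeping to be the only genuine obstacle: it is precisely what eliminates the intermediate values of $t$ that would otherwise give neither freeness conclusion, and everything else is formal.

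The splitting types then drop out of the same dual sequence by identifying the kernel $\cT_{Z'}^{*}$. If $t=k-1$ the map is $(\phi_1,0)$ with $\phi_1\in\kk^{*}$, so $\cT_{Z'}^{*}\simeq\cO_{\PP^2}(k-1)\oplus\cO_{\PP^2}(k+r)$ and therefore $\cT_{Z'}\simeq\cO_{\PP^2}(1-k)\oplus\cO_{\PP^2}(-k-r)$, which is alternative \eqref{minimo}. If $t=k+r-1$ the component $\phi_2$ is a nonzero constant, so $\cT_{Z'}^{*}$ fits into $0\to\cO_{\PP^2}(k+r-1)\to\cT_{Z'}^{*}\to\cO_{\PP^2}(k)\to0$, which splits because $\Ext^1_{\PP^2}(\cO_{\PP^2}(k),\cO_{\PP^2}(k+r-1))\simeq\HH^1(\PP^2,\cO_{\PP^2}(r-1))=0$; thus $\cT_{Z'}\simeq\cO_{\PP^2}(-k)\oplus\cO_{\PP^2}(1-k-r)$, which is alternative \eqref{massimo}.

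It remains to treat $t\ge k+r$ and rule out freeness, for alternative \eqref{tertio}. Taking cohomology in the deletion sequence twisted by $\cO_{\PP^2}(j)$ and using $\HH^1(\PP^2,\cO_{\PP^2}(\ell))=0$ for all $\ell$, one computes $\HH^0(\PP^2,\cT_{Z'}(k-1))=0$ and $\HH^0(\PP^2,\cT_{Z'}(k))\ne0$, so the least twist of $\cT_{Z'}$ carrying a section is $k$. Were $\cA_{Z'}$ free, its exponents would therefore have to be $(k,k+r-1)$, forcing $c_2(\cT_{Z'})=k(k+r-1)$ and hence $t=k+r-1$, contradicting $t\ge k+r$. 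So $\cA_{Z'}$ is not free; and since $t=k-1$, $t=k+r-1$ and $t\ge k+r$ now exhaust the admissible range found in the second paragraph, the three alternatives are exhaustive.
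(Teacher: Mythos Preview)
Your proof is correct and follows the paper's approach: both dualize the deletion sequence and read off the constraints on $t$ from the resulting surjection $\cO_{\PP^2}(k)\oplus\cO_{\PP^2}(k+r)\epi\cO_{H_z}(t+1)$. You are more explicit than the paper in ruling out the intermediate range $k\le t\le k+r-2$ via the degree analysis of $(\phi_1,\phi_2)$, and for alternative \eqref{tertio} you argue by determining the least twist of $\cT_{Z'}$ admitting a section and deriving a Chern-class contradiction, whereas the paper instead twists the dual sequence by $-t-1$ to obtain $\HH^1(\PP^2,\cT_{Z'}^*(-t-1))\ne 0$ directly; both routes are short and equivalent.
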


\begin{proof}
  Dualizing the exact sequence \eqref{deletion} (i.e., applying to it
  the functor $\mathcal{H}om_{\cO_{\PP^2}}(-,{\cO_{\PP^2}})$), 
  using the fact that
  $\mathcal{E}xt^1_{\cO_{\PP^2}}(\cO_{H_Z}(-t),{\cO_{\PP^2}}) \simeq
  \cO_{H_z}(t+1)$ for all integer $t$, we obtain an exact sequence:
  \begin{equation} \label{duale}
  0\to \cT_{Z'}^*\to \cT_{Z}^* \to \cO_{H_z}(t_{Z,z}+1)\to 0.
  \end{equation}
  Here $(-)^*$ denotes the dual of a vector bundle.
  Since $\cT_{Z}^* \simeq \cO_{\PP^2}(k) \oplus \cO_{\PP^2}(k+r)$, we have thus a
  a surjective map:
  \[
  \cO_{\PP^2}(k) \oplus \cO_{\PP^2}(k+r) \epi \cO_{H_z}(t_{Z,z}+1).
  \]

  Then, it
  is  clear that $t_{Z,z} \ge k -1$ for otherwise there could not be an
  epimorphism as above. 
  Also, it is clear that in case \eqref{minimo} 
  the kernel bundle of the above map splits in the desired way, since
  the map above factors as:
  \[
  \cT_{Z}^* \to \cO_{\PP^2}(k) \epi \cO_{H_z}(k),
  \]
  where the first map is the projection onto the direct summand
  $\cO_{\PP^2}(k)$ and the second map is the canonical surjection.
  The case \eqref{massimo} is analogous.

  Let us prove now the case \eqref{tertio}. We consider again the exact sequence \eqref{duale}.
      We twist it by $-t_{Z,z}-1$ and take the long exact sequence of cohomology. Since $t_{Z,z} \ge k+r$ we get 
  $\HH^1(\PP^2,\cT_{Z'}^*(-t_{Z,z}-1))\neq 0$ which proves that $\cT_{Z'}$ does not decompose as a direct sum of line bundles.
\end{proof}

In the same spirit, we have the following proposition.

\begin{prop} \label{tZ}
Assume  $c_2(\TZ)=k(k+r)$. Then:
\begin{enumerate}[i)]
\item \label{non in mezzo} for all $z \in Z$, we have $t_{Z,z} \not\in ]k-1,k+r-1[$;
\item \label{uguale} if there is $z \in Z$ such that $t_{Z,z} = k-1$ or $t_{Z,z} =
  k+r-1$, then $\cA_Z$ is free with exponents $(k,k+r)$; 
\item \label{minore} if there is $z \in Z$ such that $t_{Z,z} < k-1$,
 then $\cA_Z$ is not free.
\end{enumerate}
Moreover, if $\cA_Z$ is not free, but has the same combinatorial type
of a free arrangement, then for all $z \in Z$ we have $t_{Z,z} \ge k+r$.
\end{prop}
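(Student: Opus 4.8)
The plan is to treat each $z\in Z$ by means of the deletion sequence \eqref{deletion} and its dual \eqref{duale}, restricted to the line $H_z\subset\PP^2$, feeding in two a priori bounds that come from the hypothesis $c_2(\TZ)=k(k+r)$. First, Lemma \ref{splitting} applied to $E=\TZ(k)$, which has $c_1(E)=-r$ and $c_2(E)=0$, gives through its last assertion that on \emph{every} line $H$ one has $(\TZ)_{|H}\simeq\cO_{\PP^2}(-a_H)\oplus\cO_{\PP^2}(-b_H)$ with $a_H\le k\le b_H$. Second, Theorem \ref{generic} together with the semicontinuity used in Corollary \ref{res to line} yields $a_{\mathrm{gen}}=d_Z\le k$ for $\TZ$ and, applied to the deleted set, $a'_{\mathrm{gen}}=d_{Z'}$ for $\cT_{Z'}$; since $Z'=Z\setminus\{z\}\subset Z$ forces $d_{Z'}\le d_Z\le k$, the smaller splitting exponent $a'_H$ of $\cT_{Z'}$ on any line obeys $a'_H\le a'_{\mathrm{gen}}\le k$, by upper semicontinuity of $\hh^0((\cT_{Z'})_{|H}(j))$.

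Assertion \eqref{minore} is then immediate from \eqref{duale}: were $\cA_Z$ free with $t_{Z,z}<k-1$, the bundle $\cT_Z^*\simeq\cO_{\PP^2}(k)\oplus\cO_{\PP^2}(k+r)$ would surject onto $\cO_{H_z}(t_{Z,z}+1)$, which is impossible because $t_{Z,z}+1<k$ annihilates both $\Hom(\cO_{\PP^2}(k),\cO_{H_z}(t_{Z,z}+1))$ and $\Hom(\cO_{\PP^2}(k+r),\cO_{H_z}(t_{Z,z}+1))$. For \eqref{uguale} I would restrict \eqref{deletion} to $H_z$; computing the $\mathrm{Tor}$ term exhibits $(\TZ)_{|H_z}$ as an extension of $\cO_{H_z}(-(2k+r-1-t_{Z,z}))$ by $\cO_{H_z}(-(t_{Z,z}+1))$. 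If $t_{Z,z}=k-1$ this extension splits (its obstruction lives in $\HH^1(\PP^1,\cO_{\PP^1}(r))=0$), so $(\TZ)_{|H_z}\simeq\cO_{H_z}(-k)\oplus\cO_{H_z}(-k-r)$ and $\cA_Z$ is free by Lemma \ref{splitting}, condition \eqref{one line}. If $t_{Z,z}=k+r-1$ (with $r\ge1$, the case $r=0$ coinciding with the previous value) I would instead restrict \eqref{duale}: the surjection $(\cT_Z^*)_{|H_z}\epi\cO_{H_z}(k+r)$ together with $b_{H_z}\ge k+r$ again forces $(\TZ)_{|H_z}$ to be balanced, hence freeness.

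For \eqref{non in mezzo} the decisive move is to restrict $\cT_{Z'}$, not $\TZ$, to $H_z$: the surjection $\cT_{Z'}\epi\cO_{H_z}(-t_{Z,z})$ of \eqref{deletion} and $c_1(\cT_{Z'})=-(2k+r-1)$ pin down $(\cT_{Z'})_{|H_z}\simeq\cO_{H_z}(-t_{Z,z})\oplus\cO_{H_z}(-(2k+r-1-t_{Z,z}))$, so its smaller exponent equals $\min(t_{Z,z},\,2k+r-1-t_{Z,z})$. Assume $t_{Z,z}\in\,]k-1,k+r-1[$, i.e. $k\le t_{Z,z}\le k+r-2$ (an empty range unless $r\ge2$). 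If $k+1\le t_{Z,z}\le k+r-2$ this minimum is $\ge k+1$, contradicting $a'_{H_z}\le k$. The single remaining value $t_{Z,z}=k$ gives $a'_{H_z}=k$, which is squeezed to equal $a'_{\mathrm{gen}}=d_{Z'}$; hence $d_{Z'}=d_Z=k$ and $\cA_Z$ is free by Corollary \ref{res to line}, so the preceding proposition forbids $t_{Z,z}=k$, a contradiction. This proves \eqref{non in mezzo}.

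Finally, \eqref{non in mezzo}--\eqref{minore} show that a non-free $\cA_Z$ with $c_2(\TZ)=k(k+r)$ has, for each $z$, either $t_{Z,z}\le k-2$ or $t_{Z,z}\ge k+r$ (the borderline values $k-1$ and $k+r-1$ being excluded by \eqref{uguale}). Both $t_{Z,z}$ and, via \eqref{multipli chern}, the integer $c_2$ are combinatorial invariants; so if $\cA_Z$ is combinatorially a free arrangement $\cB$, then $\cB$ is free with exponents $(k,k+r)$ and $t_{Z,z}=t_{\cB,z}$, and the preceding proposition applied to $\cB$ gives $t_{\cB,z}\ge k-1$. This excludes $t_{Z,z}\le k-2$, leaving $t_{Z,z}\ge k+r$, as claimed. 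I expect the real obstacle to be precisely \eqref{non in mezzo} in the middle range, where the extension presenting $(\TZ)_{|H_z}$ may fail to split and a restriction of $\TZ$ alone gives no contradiction; passing to $\cT_{Z'}$, whose restriction to $H_z$ is rigidly determined by \eqref{deletion}, is what reduces that delicate range to the single inequality $a'_{H_z}\le k$.
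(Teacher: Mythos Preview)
Your arguments for \eqref{minore}, \eqref{uguale}, and the final ``moreover'' are fine and close in spirit to the paper's. The gap is in \eqref{non in mezzo}.

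You assert that the surjection $(\cT_{Z'})_{|H_z}\epi\cO_{H_z}(-t_{Z,z})$, together with $c_1(\cT_{Z'})$, ``pins down'' $(\cT_{Z'})_{|H_z}\simeq\cO_{H_z}(-t_{Z,z})\oplus\cO_{H_z}(-(2k+r-1-t_{Z,z}))$. This is false: a short exact sequence $0\to\cO(p)\to E\to\cO(q)\to 0$ on $\PP^1$ does \emph{not} determine the splitting type of $E$. For instance with $t_{Z,z}=k$ and $r=4$ the sequence $0\to\cO_{H_z}(-(k+3))\to\cO_{H_z}(-(k+1))\oplus\cO_{H_z}(-(k+2))\to\cO_{H_z}(-k)\to 0$ is perfectly legitimate, so nothing you wrote excludes $a'_{H_z}=k+1$. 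What \emph{is} true (and is all you actually use) is the one-sided bound $a'_{H_z}\ge\min(t_{Z,z},\,2k+r-1-t_{Z,z})$: in any exact sequence $0\to\cO(p)\to\cO(c)\oplus\cO(d)\to\cO(q)\to 0$ on $\PP^1$ with $c\le d$ one has $c\ge\min(p,q)$, because if $c<\min(p,q)$ then the component of the sub-bundle map landing in $\cO(c)$ vanishes, forcing the quotient to have torsion. Once you replace the asserted equality by this inequality, your squeeze $k+1\le a'_{H_z}\le a'_{\mathrm{gen}}\le d_{Z'}\le d_Z\le k$ for $t\in[k+1,k+r-2]$, and the analogous squeeze at $t=k$, go through.

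By contrast, the paper's proof is much shorter and does not touch $\cT_{Z'}$ at all: it simply restricts the \emph{dual} sequence \eqref{duale} to $H_z$, writes $(\cT_Z^*)_{|H_z}\simeq\cO_{H_z}(k-s)\oplus\cO_{H_z}(k+r+s)$ with $s\ge 0$ (this is the last assertion of Lemma~\ref{splitting}), and observes that a surjection onto $\cO_{H_z}(t_{Z,z}+1)$ with $k<t_{Z,z}+1<k+r$ is impossible, since the map from $\cO_{H_z}(k+r+s)$ is zero while the map from $\cO_{H_z}(k-s)$ has positive degree and hence zeros. The same computation, at the boundary values $t_{Z,z}+1\in\{k,k+r\}$, forces $s=0$ and yields \eqref{uguale}; and for $t_{Z,z}+1<k$ it forces $s>0$, giving \eqref{minore}. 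So all three items follow from one restriction, with no appeal to $d_{Z'}$, semicontinuity, or the preceding proposition.
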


\begin{proof}
  Consider again the exact sequence obtained in the proof of the
  previous proposition (from which we borrow the notation also):
  \[
  0 \to \cT_{Z'}^* \to \cT_{Z}^* \to \cO_{H_z}(t_{Z,z}+1) \to 0.
  \]
  Consider now the restriction to the line $H_z$ of $\cT_{Z}^*$.
  This splits as $\cO_{H_z}(k-s) \oplus \cO_{H_z}(k+r+s)$, for some
  integer $s \ge 0$ by Lemma \ref{splitting}, indeed one computes $c_2(\TZ(-k))=0$.
  So we get an epimorphism:
  \begin{equation}
    \label{epi-Hz}
  \cO_{H_z}(k-s) \oplus \cO_{H_z}(k+r+s) \epi \cO_{H_z}(t_{Z,z}+1).    
  \end{equation}
  Now, in case $t_{Z,z} = k-1$ or $t_{Z,z} = k+r-1$, this forces
  $s=0$, hence $\TZ$ is free by Corollary \ref{res to line}.
  This gives \eqref{uguale}.
  By the same corollary, since $t_{Z,z} < k-1$ forces $s<0$, we get
   \eqref{minore}.
  To see \eqref{non in mezzo},
  we note that an epimorphism of the form \eqref{epi-Hz} cannot exist in this range.

  To check the last statement, note that $\cA_Z$ cannot have the
  combinatorial type of a free arrangement $\cA_{Z_0}$ if
  $t_{Z,z}<k-1$, for necessarily we have
  $c_2(\cT_{Z_0})=k(k+r)$ and we would get $t_{Z_0,z_0}<k-1$ for some
  $z_0 \in Z_0$
  contradicting \eqref{minore}.
  Also, we cannot have $t_{Z,z}=k-1$ or $t_{Z,z}=k+r-1$ for any $z\in
  Z$ for otherwise $\cA_Z$ would be free by \eqref{uguale}.
  Then by \eqref{non in mezzo} we get $t_{Z,z} \ge k+r$ for all $z
  \in Z$.
\end{proof}

\section{Combinatorial nature of freeness for up to $12$ lines}

\label{up to 12}


In this section we work over $\kk=\C$.
The aim here will be to show that Terao's conjecture holds for
arrangements of up to $12$ complex projective lines.
As far as we
know, this had been checked for up to $10$ lines, see \cite{wakefield-yuzvinsky}.
The reason for devoting a paragraph to such a little progress is that
we still hope that our methods involving
the {\it unstable section} of Section \ref{high} can help treating more
cases.
Also, it seems to us the combinatorial subtleties are ruled
out by Theorem \ref{concurrent} for less than $11$ lines, so this
seems to be the first intriguing case.

\begin{THM} \label{12}
  Terao's conjecture holds for up to $12$ lines in $\PP^2_\C$.
\end{THM}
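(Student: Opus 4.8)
The plan is to induct on the number of lines, so suppose Terao's conjecture is known for arrangements of fewer than $m$ lines; the base cases $m \le 10$ are \cite{wakefield-yuzvinsky}, hence effectively only $m\in\{11,12\}$ remain. Let $\cA=\cA_Z$ be free with exponents $(k,k+r)$, where $m=2k+r+1$ and $k\le k+r$, and let $\cA'=\cA_{Z'}$ have the same combinatorial type; I must show $\cA'$ is free. The starting point is that several relevant quantities are combinatorial: by \eqref{multipli}--\eqref{multipli chern} we have $c_2(\cT_{Z'})=c_2(\TZ)=k(k+r)$; the maximal multiplicity $h_{\max}$ of $D_{\cA'}$ equals that of $D_\cA$; and the numbers $t_{Z',z}=t_{Z,z}$ agree line by line. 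In particular the candidate exponents $(k,k+r)$ are pinned down by the type, being the factorization of $c_2$ with sum $m-1$.

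First I would dispose of the case $h_{\max}\ge k$. Since $\cA$ is free, the last statement of Corollary \ref{res to line} forbids secants with more than $k+r+1$ points, so $h_{\max}\le k+r+1$; being combinatorial, this bound passes to $\cA'$. Thus $\cA'$ carries a point of multiplicity $h_{\max}\in[k,k+r+1]$ together with $c_2(\cT_{Z'})=k(k+r)$, and Theorem \ref{concurrent} yields freeness of $\cA'$ at once. This settles every type admitting a point of multiplicity at least $k$, which already handles the bulk of the combinatorial types (for instance the Hesse type, with exponents $(4,7)$ and $h_{\max}=4$).

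Next comes the range $h_{\max}\le k-1$. Here I would feed the equalities $t_{Z',z}=t_{Z,z}$ into Proposition \ref{tZ}. Because $\cA$ is free, the proposition immediately preceding it forces, for each line $z$, either $t_{Z,z}\in\{k-1,k+r-1\}$ or $t_{Z,z}\ge k+r$. If some line has $t_{Z',z}\in\{k-1,k+r-1\}$, then part \eqref{uguale} of Proposition \ref{tZ} makes $\cA'$ free directly. There remains the genuinely hard subcase in which \emph{every} line satisfies $t_{Z,z}\ge k+r$, equivalently every one-line deletion of $\cA$ (and of $\cA'$) is non-free; the dual-Hesse configuration of $9$ lines with exponents $(4,4)$, in which each line carries four triple points and $t_{Z,z}=4=k+r$, is the model example. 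A direct enumeration through the non-negativity of the $b_h$ in \eqref{multipli}--\eqref{multipli chern} shows that for $m\le 12$ this subcase produces only a short explicit list: for $m=12$ the exponents must be $(5,6)$ and $(b_3,b_4)\in\{(4,7),(1,8)\}$ (the exponents $(4,7)$ and all unbalanced ones being excluded), and for $m=11$ the exponents are $(5,5)$ with a similarly short list.

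For each surviving type, since neither the deletion sequence \eqref{deletion} nor the inductive hypothesis reduces the problem (every proper deletion being non-free), I would invoke the generic-splitting criterion of Corollary \ref{res to line}: $\cA'$ is free if and only if $d_{Z'}=k$, equivalently $s=0$ in the general-line splitting $(\cT_{Z'})_{|H_y}\simeq \cO_{H_y}(-k+s)\oplus\cO_{H_y}(-k-r-s)$, $s\ge 0$. Using the blow-up description of Theorem \ref{generic} I would study, for a general $y\in\PD^2$, the possible orders of trisecants to $Z'$ through $y$ and the curves of degree $k+1$ with a $k$-fold point at $y$ passing through $Z'$, exploiting the rigid secant structure forced by the prescribed numbers of $3$- and $4$-secants. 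The main obstacle is precisely this last step: here the deletion machinery stalls, and one must verify by hand — in effect a rigidity analysis of the finitely many special point configurations $Z'\subset\PD^2$ realizing these types — that no curve of degree $k$ with a $(k-1)$-fold point at a general point passes through $Z'$, so that $d_{Z',y}=k$ for generic $y$ and the generic splitting is balanced for \emph{every} realization. (Types not realizable by any free arrangement are vacuous and may be discarded.) It is this geometric pinning of $d_{Z'}$ for the dual-Hesse type and its $11$- and $12$-line analogues that I expect to carry all the real difficulty.
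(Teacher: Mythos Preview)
Your reduction is sound and essentially matches the paper's: using Theorem~\ref{concurrent} together with Corollary~\ref{res to line} one disposes of every combinatorial type with $h_{\max}\ge k$, and the lemma on arrangements without $4$-secants (part~\eqref{classify 4}) forces $h_{\max}\ge 4$ for any free arrangement with $m\ge 10$, so for $m\in\{11,12\}$ one is left exactly with exponents $(5,5)$ or $(5,6)$ and $h_{\max}=4$. Your further filter via Proposition~\ref{tZ} is legitimate and potentially trims the list, but it does not finish the job.

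The genuine gap is the hard subcase. You explicitly defer it to a ``rigidity analysis \ldots\ by hand'' of $d_{Z',y}$ for generic $y$, and no argument is given. This is not a formality: the space of quartics in $\PD^2$ with a triple point at a general $y$ is $9$-dimensional, and showing that \emph{every} $11$-point configuration $Z'$ with the prescribed combinatorics imposes independent conditions on it (so that $d_{Z',y}\ge 5$) is exactly the crux. Nothing in Theorem~\ref{generic} or Corollary~\ref{res to line} tells you how to rule out such a special curve for a non-generic $Z'$, and the secant data alone do not obviously control it.

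The paper closes this gap by a different route, not via $d_{Z'}$ at all. Assuming $\cA_{Z'}$ is not free, it runs the unstable-section argument of Lemma~\ref{splitting} with the $4$-secant $L$ in place of a $k$-secant: one obtains a length-$4$ scheme $\Gamma$ contained in a line $H_w$, with $w\in L\setminus Z$; the extended arrangement $\tilde Z=Z\cup\{w\}$ turns out to be free with exponents $(4,7)$. A short case analysis on how $\Gamma$ splits into secants, combined with \eqref{multipli}--\eqref{multipli chern} for $\tilde Z$ and Hirzebruch's inequality for complex line arrangements, then yields a contradiction in every case. So the missing idea is twofold: (a) exploit the unstable section coming from the actual $4$-secant (not a generic point), which produces the auxiliary point $w$ and the free extension; and (b) use Hirzebruch's inequality as the combinatorial obstruction. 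Your proposed route through $d_{Z'}$ at a general point does not naturally lead to either of these.
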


We fix again our notation: we consider a finite set of point $Z$ in
$\PD^2$ and the corresponding arrangement $\cA = \cA_Z$.

\begin{lem}
  Assume $\cA$ is free with exponents $(k,k+r)$, with $r \ge 0$.
  \begin{enumerate}[i)]
  \item \label{no higher} There is no $h$-secant line to $Z$, for $h > k+r+1$.
  \item \label{classify 4} Assume that $Z$ is non-degenerate and has no $4$-secant line.
  Then $(k,k+r)$ take value $(1, 1)$, $(1, 2)$, $(2, 2)$, $(2, 3)$, $(3, 3)$,
  $(3, 4)$ or $(4, 4)$.
  In the last case, $\cA$ has the combinatorial type of the 
  arrangement ({\it sometimes called ``dual Hesse}) given by the $9$
  lines corresponding to the inflection 
  points of a smooth cubic curve in $\PD^2$.
  \end{enumerate}
\end{lem}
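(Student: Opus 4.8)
The plan is to read part \eqref{no higher} straight off Corollary \ref{res to line}, and to obtain part \eqref{classify 4} from the combinatorial count encoded in \eqref{multipli}--\eqref{multipli chern}: first reduce to a short list of numerical candidates via the inequality $b_{\cA,2}\ge 0$, then discard the two spurious pairs and identify the extremal one. For \eqref{no higher}, observe that freeness with exponents $(k,k+r)$ forces $c_2(\TZ)=k(k+r)$ and $m=2k+r+1$, so the hypotheses of Corollary \ref{res to line} hold; its last assertion says a strict $h$-secant with $h\ge k+r+2$ is incompatible with freeness, which is exactly \eqref{no higher}.

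For \eqref{classify 4}, I would set $n=k+r$, so $m=k+n+1$ and $c_2(\TZ)=kn$. Since $Z$ has no $4$-secant, $\cA$ has only double and triple points, and \eqref{multipli}--\eqref{multipli chern} collapse to $b_{\cA,2}+3b_{\cA,3}={m\choose 2}$ and $b_{\cA,3}={m-1\choose 2}-kn$. The second relation simplifies to the clean identity $b_{\cA,3}={k\choose 2}+{n\choose 2}$, so the trivial inequality $b_{\cA,2}\ge 0$ reads $3\bigl({k\choose 2}+{n\choose 2}\bigr)\le{k+n+1\choose 2}$, equivalently $k^2-kn+n^2\le 2(k+n)$. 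A quick case check on $r=n-k$ (with $1\le k\le n$) gives $r=0\Rightarrow k\le 4$, $r=1\Rightarrow k\le 3$, $r=2\Rightarrow k\le 2$, and no solution for $r\ge 3$; this produces exactly nine pairs, namely the seven listed together with the two extra pairs $(1,3)$ and $(2,4)$.

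It then remains to eliminate these two pairs and to fix the combinatorial type in the $(4,4)$ case. For $(1,3)$, and more generally whenever $k=1$, I would invoke that freeness forces $d_Z=k=1$ by Corollary \ref{res to line}: a general $y$ satisfies $d_{Z,y}=1$, i.e. a conic passes through $Z$ and through a general point of $\PD^2$, so $\dim|\cI_Z(2)|\ge 1$. For $m\ge 5$ a pencil of conics has only $4$ base points unless all its members contain a common line, and that line must then carry at least $m-1\ge 4$ points of $Z$, i.e. a $4$-secant---contradiction. Hence $k=1$ survives only for $m\le 4$, leaving $(1,1)$ and $(1,2)$ and killing $(1,3)$. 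The pairs $(2,4)$ and $(4,4)$ both have $b_{\cA,2}=0$, so every pair of points of $Z$ lies on a $3$-secant, unique since there is no $4$-secant. For $(2,4)$ this makes the seven $3$-secants a Steiner triple system $S(2,3,7)$, i.e. a $(7_3)$-configuration; the only such configuration is the Fano plane, which is not realizable over $\C$, so $(2,4)$ cannot occur. For $(4,4)$ the twelve $3$-secants likewise form a Steiner triple system $S(2,3,9)$, which is combinatorially unique and equals the affine plane $AG(2,\F_3)$, that is, the Hesse (dual Hesse) configuration; this identifies the combinatorial type.

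The hard part is the passage from the purely numerical datum $b_{\cA,2}=0$ to a rigid incidence geometry, followed by the two classical inputs on small Steiner triple systems: that $S(2,3,7)$ is the complex-unrealizable Fano plane, and that $S(2,3,9)$ is the unique $(9_4,12_3)$ Hesse configuration. A secondary point needing care is the degeneration analysis of the conic pencil in the $k=1$ step, to be sure the fixed component is genuinely a line forcing a $4$-secant. I expect the Steiner-system identification to be the real crux, since everything else is either formal bookkeeping or a direct appeal to Corollary \ref{res to line}.
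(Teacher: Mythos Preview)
Your argument is correct, and for part \eqref{classify 4} it is in fact more complete than the paper's own proof. For \eqref{no higher} the paper argues directly from the exact sequence \eqref{reduction} rather than quoting Corollary \ref{res to line}, but this amounts to the same thing.

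For \eqref{classify 4} the paper proceeds as you do, computing $b_{\cA,2}$ from \eqref{multipli}--\eqref{multipli chern} and imposing $b_{\cA,2}\ge 0$. However, the paper's displayed formula for $b_{\cA,2}$ contains a typo, and its assertion that the expression is negative for all $r\ge 2$ is not correct: with the right formula $b_{\cA,2}=-k^2-kr-r^2+4k+2r$ one finds $b_{\cA,2}=1$ for $(k,k+r)=(1,3)$ and $b_{\cA,2}=0$ for $(2,4)$, so these two pairs survive the count and the paper does not explicitly dispose of them. Your treatment closes this gap. Your elimination of $(1,3)$ via the conic pencil is valid; an alternative one-liner is that freeness with exponents $(1,m-2)$ forces a near-pencil (this follows e.g.\ from Proposition \ref{facile} applied at the point of multiplicity $m-1$, or directly from the existence of a linear logarithmic derivation), hence a $4$-secant when $m=5$. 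Your elimination of $(2,4)$ via the Fano configuration, and your identification of the $(4,4)$ case with the unique $S(2,3,9)$, are exactly the right inputs; the paper only records the combinatorial type in the $(4,4)$ case without spelling out the Steiner-system uniqueness.
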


\begin{proof}
  The first claim follows by looking at the exact sequence
  \eqref{reduction}.
  Indeed, let $L$ be a strict $h$-secant line to $Z$, with $h>k+r+1$.
  Then $Z'=Z \setminus L$ consists of $2k+r+1-h$ points, and we have
  an inclusion $\cI_{Z'} \to \cI_Z(1)$.
  Applying $p_* \circ q^*$ to this inclusion we get a nonzero map:
  \[
  \cO_{\PP^2}(h-2k-r-1) \to \TZ \simeq \cO_{\PP^2}(-k) \oplus \cO_{\PP^2}(-k-r).
  \]
  This is impossible since $h-2k-r-1>-k$, and we get \eqref{no higher}.

  Let us look at \eqref{classify 4}.
  Since $Z$ is non-degenerate, we have $k \ge 1$ and by \eqref{no
    higher} we get $k+r \le 4$.
  The relations \eqref{multipli} and \eqref{multipli chern} yield:
  \[
  b_{\cA,2}=-k^2-ks-s^2+2k-2s.
  \]
  It is easy to see that this gives a negative number for $r\ge 2$, or
  for $r=1$, $k\ge 4$, and also for $r=0$, $k\ge 5$.
  This leaves out the desired cases only.
  In case $r=0$, $k=4$, we get $b_{\cA,3}=12$ and $b_{\cA,3}=0$,
  which is the combinatorial type of the arrangement of $9$ lines dual
  to the $9$ inflection points of a smooth cubic curve.
\end{proof}

\begin{proof}[Proof of Theorem \ref{12}]
  The case when $Z$ is degenerate is trivial.
  Theorem \ref{concurrent} and the previous lemma only leave out the cases of
  line arrangements $\cA$ of $11$ lines with $c_2(\TlogA)=25$ or of
  $12$ lines with $c_2(\TlogA)=30$, having in both cases a quadruple
  point and no quintuple points.

  We look at the first case, which in our notation has $k=5$,
  $r=0$; the second case is completely analogous, and in fact a bit
  easier, so we will only say a word about it at the end of the proof.
  We consider thus an arrangement $\cA$ with $c_2(\TlogA)=25$ and such
  that $\TlogA$ is not free.
  Also, we can assume that there is a line arrangement $\cA_0$, with
  the same combinatorial type of $\cA$, such
  that $\cT_{\PP^2}(-\log D_{\cA_0})$ is free with exponents $(5,5)$.

  Let $Z$ and $Z_0$ be the corresponding set of points in $\PP^2$, so $\cA =
  \cA_Z$, $\cA_0=\cA_{Z_0}$, let $x$ be a
  quadruple point of $\cA$, and set $L$ for 
  the corresponding $4$-secant line to $Z$ in $\PD^2$.
  We are in the situation of Theorem \ref{usa ungar}, only without the
  assumptions of reality and of existence of $x_1,x_2$.
  However we let $Z' = Z \setminus L$ (this time $Z'$ consists of $7$ points),
  and we consider the exact sequence \eqref{reduction} and the
  long exact obtained by applying $p_* \circ q^*$:
  \begin{align*}
   0 \to & \cO_{\PP^2}(-7) \to \TZ \xr{\alpha} \cO_{\PP^2}(-3) 
  \nonumber \to \rR^1p_*(q^*(\cI_{Z'})) \to\rR^1p_*(q^*(\cI_{Z}(1))) \to 
  \cO_{\langle x^{2}\rangle} \to 0.
 \end{align*}
 Again $\im(\alpha)=\cI_\Gamma(-3)$, where 
 $\Gamma$ is a subscheme of $\PP^2$ of length $4$, so that the above sequence becomes:
 \begin{equation} \label{attenzione}
 0 \to \cO_{\PP^2}(-7) \to \TZ \to \cI_{\Gamma}(-3) \to 0.   
 \end{equation}
 Since $\TZ$ is not free by assumption, by Lemma \ref{splitting} we
 have $\HH^0(\PP^2,\TZ(4)) \ne 0$, and again we get
 $\HH^0(\PP^2,\cI_\Gamma(1)) \ne 0$ so that $\Gamma$ is
 contained in a line $H_w$, for some $w \in \PD^2$.

\begin{claim} \label{not in Z}
The point $w$ does not lie in $Z$.
\end{claim}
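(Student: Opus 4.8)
The plan is to argue by contradiction: assume $w \in Z$ and exploit the geometric interpretation of $\Gamma$ already used in the proof of Theorem \ref{concurrent}. Recall that the length-$4$ subscheme $\Gamma \subset \PP^2$ parametrizes, with multiplicity, the bisecant lines to $Z' = Z \setminus L$ that meet $L$ away from $Z$, so the inclusion $\Gamma \subset H_w$ says exactly that every such bisecant passes through the point $w \in \PD^2$. Since $\Gamma$ has length $4$, at least one such bisecant exists. (If $\Gamma$ failed to span $H_w$ uniquely, then $w$ would range over a pencil and could be chosen outside the finite set $Z$, so we may assume $H_w$ is the unique line through $\Gamma$.)

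First I would dispose of the possibility $w \in Z \cap L$. If $w$ lay on $L$, then any line $M \ne L$ through $w$ meets $L$ exactly at $w$; and a bisecant to $Z'$ cannot be contained in $L$, because $Z' \cap L = \emptyset$. Hence every bisecant counted by $\Gamma$ would meet $L$ at the point $w \in Z \cap L$, contradicting the requirement that it meet $L$ away from $Z$. As $\Gamma \ne \emptyset$, this rules out $w \in L$.

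The remaining, and genuinely harder, case is $w \in Z' = Z \setminus L$. Here any bisecant to $Z'$ avoiding $w$ is a bisecant to $Z'_0 := Z' \setminus \{w\}$, so the condition $\Gamma \subset H_w$ forces every such line to meet $L$ at a point of $Z \cap L$. Taking $L$ as the line at infinity, this says the six points of $Z'_0$ determine at most four directions, namely those cut out by the four points of $Z \cap L$. Over $\R$ this is immediately impossible by Ungar's theorem, as in the proof of Theorem \ref{usa ungar}; but here $\kk = \C$ and that tool is unavailable, so the contradiction must be extracted combinatorially. This is the main obstacle: one must replace the single clean application of Ungar's theorem by a direct analysis of six-point ``net-like'' configurations with only four directions.

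To finish this case I would combine three constraints. Since $\cA_Z$ is not free but is combinatorially free with exponents $(5,5)$, Proposition \ref{tZ} gives $t_{Z,w} \ge k+r = 5$; dualizing, the ten points of $Z \setminus \{w\}$ lie on at most five lines through $w$, four of which are the lines $\overline{w\,p_i}$ joining $w$ to the four points $p_i \in Z \cap L$. Next, the absence of a $5$-secant bounds collinear subconfigurations: each line $\overline{w\,p_i}$ carries at most two points of $Z'_0$, and any line through some $p_i$ carries at most three points of $Z'_0$. Finally, the four-direction restriction forces every cross-pair between distinct lines $\overline{w\,p_i}$ to again point to some $p_j \in Z \cap L$. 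The plan is then to show these incidence conditions are incompatible: any admissible distribution of the six points of $Z'_0$ onto the few lines through $w$ either produces a $5$-secant or an inconsistent assignment of directions, the few remaining configurations being excluded by the relations \eqref{multipli} and \eqref{multipli chern} coming from $c_2(\TZ)=25$.
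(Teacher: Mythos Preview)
Your argument splits into two cases and the first one (ruling out $w \in Z \cap L$) is clean and correct. The problem is the second case $w \in Z'$, which you only sketch: you reduce to a statement about six complex points determining at most four directions under several side constraints, and then say ``the plan is then to show these incidence conditions are incompatible.'' That case analysis is never carried out, and it is not obvious it terminates cleanly --- you would have to control the scheme structure of $\Gamma$, enumerate the possible distributions of $Z'_0$ on lines through $w$, and check each against \eqref{multipli}, \eqref{multipli chern} and the no-$5$-secant hypothesis. This is a genuine gap: the hard half of the claim is left as a plan.

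The paper avoids all of this with a one-line idea you missed. The whole point is that we are inside a proof by contradiction where $\cA_Z$ is assumed to share the combinatorial type of a \emph{free} arrangement $\cA_{Z_0}$. If $w \in Z$, then the statement ``every bisecant to $Z'$ meeting $L$ away from $Z$ passes through $w$'' is read off from the intersection lattice, hence holds verbatim for $Z_0$: the corresponding subscheme $\Gamma_0$ lies in the line $H_{w_0}$. But $\cA_{Z_0}$ is free with exponents $(5,5)$, so Lemma~\ref{splitting} gives $\HH^0(\PP^2,\cT_{Z_0}(4))=0$, and from the analogue of \eqref{attenzione} for $Z_0$ this forces $\HH^0(\PP^2,\cI_{\Gamma_0}(1))=0$, i.e.\ $\Gamma_0$ lies in no line --- a contradiction. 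This transfers the burden entirely to the free model $Z_0$ and makes any direct configuration analysis unnecessary; your case split $w\in Z\cap L$ versus $w\in Z'$ never even arises.
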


\begin{proof}[Proof of claim \ref{not in Z}]
Recall that, by the proof of
Theorem \ref{concurrent}, the subscheme $\Gamma$ parametrizes (with
multiplicity) the set of bisecant lines to $Z'$ that meet $L$ away from $Z$.
The fact that $\Gamma$ sits in $H_w$ means that these lines all meet at $w$.

If $w$ belongs to $Z$, then this is a combinatorial property that
must also hold for $Z_0$, namely the subscheme $\Gamma_0$
 associated to $Z_0$ should be contained in a line $H_{w_0}$
 corresponding to the meeting point $w_0$.
But $Z_0$ is free with exponents $(5,5)$, so by Lemma 
\ref{splitting} we
have
$\HH^0(\PP^2,\cT_{Z_0}(4)) = \HH^0(\PP^2,\cI_{\Gamma_0}(1))=0$.
 Hence $\Gamma_0$ lies in no line.
\end{proof}

\begin{claim} \label{in L}
The point $w$ lies in $L$, and the extended arrangement $\tilde Z = Z \cup \{w\}$
is free with exponents $(4,7)$.
\end{claim}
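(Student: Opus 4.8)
The plan is to combine the sequence \eqref{attenzione} with the fact that the length-$4$ scheme $\Gamma$ lies on the line $H_w$, split off a rank-two bundle, and then recognise it as $\cT_{\tilde Z}$ for $\tilde Z = Z \cup \{w\}$. Since $\Gamma$ has length $4$ and lies on $H_w$, restriction to $H_w$ gives $0 \to \cO_{\PP^2}(-4) \to \cI_\Gamma(-3) \to \cO_{H_w}(-7) \to 0$, the sub-line-bundle being $\cI_{H_w}(-3)$ and the quotient $\cI_{\Gamma/H_w}(-3) \simeq \cO_{H_w}(-7)$ on $H_w \simeq \PP^1$. Pulling this back along the surjection $\TZ \to \cI_\Gamma(-3)$ of \eqref{attenzione}, the composite $\TZ \to \cO_{H_w}(-7)$ is onto, and its kernel $K$ is an extension of $\cO_{\PP^2}(-4)$ by $\cO_{\PP^2}(-7)$. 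As $\Ext^1_{\PP^2}(\cO_{\PP^2}(-4),\cO_{\PP^2}(-7)) \simeq \HH^1(\PP^2,\cO_{\PP^2}(-3)) = 0$, the extension splits, and I obtain
\[
0 \to \cO_{\PP^2}(-4) \oplus \cO_{\PP^2}(-7) \to \TZ \to \cO_{H_w}(-7) \to 0.
\]

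Next I would identify $K \simeq \cO_{\PP^2}(-4) \oplus \cO_{\PP^2}(-7)$ with $\cT_{\tilde Z}$, which is exactly freeness with exponents $(4,7)$. Since $w \notin Z$ by Claim \ref{not in Z}, the deletion sequence \eqref{deletion} applied to $\tilde Z$ and $w$ presents $\cT_{\tilde Z}$ as the kernel of a surjection $\TZ \to \cO_{H_w}(-t_{\tilde Z,w})$ onto a line bundle on the \emph{same} line $H_w$, and comparing second Chern classes there gives $c_2(\cT_{\tilde Z}) = 35 - t_{\tilde Z,w}$. Both $K$ and $\cT_{\tilde Z}$ are thus saturated rank-two subsheaves of $\TZ$ with $c_1 = -11$ and cokernel on $H_w$, so the identification reduces to the single equality $t_{\tilde Z,w} = 7$: then $c_2(\cT_{\tilde Z}) = 28 = c_2(K)$, and matching the two quotient maps onto $\cO_{H_w}(-7)$ (restricting to $H_w$ and using the last part of Lemma \ref{splitting} to pin down $(\TZ)_{|H_w}$) forces $K = \cT_{\tilde Z}$.

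The heart of the matter, and where $w \in L$ enters, is the computation of $t_{\tilde Z,w}$. By the interpretation recalled in the proof of Theorem \ref{concurrent} and reused for Claim \ref{not in Z}, $\Gamma$ parametrises with multiplicity the bisecants to $Z' = Z\setminus L$ meeting $L$ away from $Z$, and $\Gamma \subset H_w$ forces each such bisecant through $w$. A multiplicity count shows that the length $4$ of $\Gamma$ equals $7$ minus the number of lines through $w$ carrying the $7$ points of $Z'$, so these lie on exactly three lines through $w$, contributing $\sum_i (a_i - 1) = 4$ to $t_{\tilde Z,w} = \sum_{\bar x \in H_w \cap \cS_{\tilde Z}} (\mult(D_{\tilde Z},\bar x) - 2)$. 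If $w \in L$ (equivalently $x \in H_w$), the $4$-secant $L$ becomes a $5$-secant of $\tilde Z$ and supplies the remaining $5-2 = 3$, giving $t_{\tilde Z,w} = 7$; if instead $w \notin L$, then $L$ does not meet $H_w \cap \cS_{\tilde Z}$ and one gets only $t_{\tilde Z,w} = 4$.

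The main obstacle is therefore to prove that $w \in L$: the counting above is by itself consistent with both alternatives, so the argument must use genuine projective geometry of the configuration rather than Chern-class bookkeeping alone. I expect to exploit the concurrency of the bisecants at $w$: when $w \notin L$, every pair of points of $Z'$ that is not collinear with $w$ must be collinear with a point of $Z \cap L$ (otherwise its bisecant would meet $L$ off $Z$ without passing through $w$, contradicting $\Gamma \subset H_w$), a very rigid incidence pattern which, combined with the constraint that only three lines through $w$ carry $Z'$, I would push to a contradiction. Once $w \in L$ is secured we get $t_{\tilde Z,w} = 7$ and hence $\cT_{\tilde Z} \simeq \cO_{\PP^2}(-4) \oplus \cO_{\PP^2}(-7)$, so $\tilde Z$ is free with exponents $(4,7)$; alternatively, knowing $c_2(\cT_{\tilde Z}) = 28$ one may conclude through Corollary \ref{res to line} after exhibiting a single good line restriction.
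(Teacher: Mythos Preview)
Your setup through the exact sequence $0 \to \cO_{\PP^2}(-4)\oplus\cO_{\PP^2}(-7) \to \TZ \to \cO_{H_w}(-7) \to 0$ matches the paper's, and you correctly identify that everything reduces to proving $t_{\tilde Z,w}=7$. The gap is precisely the one you flag: you try to establish $w\in L$ first by a combinatorial count and then deduce $t_{\tilde Z,w}=7$, but you do not actually carry out the combinatorial argument (and your dichotomy ``$w\in L \Rightarrow t=7$, $w\notin L \Rightarrow t=4$'' is not quite right either: when $w\notin L$ some of the lines through $w$ carrying $Z'$ may meet $L$ in $Z\cap L$, so the length-$4$ of $\Gamma$ no longer equals $7-n'$ and the value of $t$ is not pinned to $4$).

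The paper reverses the logic and closes the gap with a short vector-bundle argument you already have all the ingredients for. From the surjection $\TZ \twoheadrightarrow \cO_{H_w}(-7)$ with kernel $\cO_{\PP^2}(-4)\oplus\cO_{\PP^2}(-7)$, one reads off the restricted splitting $(\TZ)_{|H_w}\simeq \cO_{H_w}(-3)\oplus \cO_{H_w}(-7)$ (restrict the sequence to $H_w$; the image of $\cO_{H_w}(-4)\oplus\cO_{H_w}(-7)$ in the locally free sheaf $(\TZ)_{|H_w}$ is torsion-free of degree $-3$). Now the deletion map $\TZ \twoheadrightarrow \cO_{H_w}(-t_{\tilde Z,w})$ factors through $(\TZ)_{|H_w}$, and a surjection $\cO_{H_w}(-3)\oplus\cO_{H_w}(-7)\to \cO_{H_w}(-t)$ exists only if $t\le 3$ or $t=7$ (for $4\le t\le 6$ any map kills the $\cO_{H_w}(-3)$ summand and cannot be onto). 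On the other hand the lines recorded by $\Gamma$ already force $t_{\tilde Z,w}\ge 4$. Hence $t_{\tilde Z,w}=7$, the two surjections $\TZ\to\cO_{H_w}(-7)$ coincide up to scalar (the relevant $\Hom$ is one-dimensional), and $\cT_{\tilde Z}\simeq\cO_{\PP^2}(-4)\oplus\cO_{\PP^2}(-7)$. The statement $w\in L$ is then a \emph{consequence}: $t_{\tilde Z,w}=7$ means $Z$ lies on exactly $4$ lines through $w$; if $w\notin L$ the four points of $Z\cap L$ alone occupy four distinct lines through $w$, while the $\Gamma$-lines (through $w$, bisecant to $Z'$, meeting $L$ off $Z$) are necessarily different from these, giving at least five lines---a contradiction.
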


\begin{proof}[Proof of claim \ref{in L}]
Since $\Gamma$ has length $4$ and lies in $H_w$, we get a projection
$\cI_\Gamma \to \cO_{H_w}(-4)$, with kernel $\cO_{\PP^2}(-4)$, hence
an exact sequence:
\[
0 \to \cO_{\PP^2}(-4) \to \cI_\Gamma(-3) \to \cO_{H_w}(-7) \to 0.
\]
Composing the above projection with the epimorphism $\TZ \to
\cI_\Gamma(-3)$ we get a surjective map $\TZ \to \cO_{H_w}(-7)$.
This gives an exact sequence:
\[
0 \to \cO_{\PP^2}(-7) \oplus \cO_{\PP^2}(-4) \to \TZ \to \cO_{H_w}(-7) \to 0,
\]
obtained by observing that the map $\cO_{\PP^2}(-7) \to \TZ$ does not factor 
through $\cO_{\PP^2}(-4) \to \TZ$ (indeed, in that case its cokernel would have torsion).

Now, we would like to show that the map $\TZ \to \cO_{H_w}(-7)$ is the
restriction map associated according to \eqref{deletion} to deletion of $H_w$ from the arrangement
corresponding to $\tilde Z$,
so that its kernel $\cO_{\PP^2}(-7) \oplus \cO_{\PP^2}(-4)$ will be
$\cT_{\tilde Z}$, thus proving the claim on freeness of $\tilde Z$.
For this, it suffices to show that $t_{{\tilde Z},w}=7$.
We rewrite \eqref{deletion} as:
\[
0 \to \cT_{\tilde Z} \to \TZ \to \cO_{H_w}(-t_{{\tilde Z},w}) \to 0.
\]
Since we have a surjection $\TZ \epi \cO_{H_w}(-7)$, we must have
$(\TZ)_{|H_w} \cong \cO_{H_w}(-3) \oplus \cO_{H_w}(-7)$.
Therefore, by the exact sequence above we deduce $t_{{\tilde Z},w}=7$
or $t_{{\tilde Z},w} \le 3$.
But $t_{{\tilde Z},w} \ge 4$ since we have at least the $4$ bisecant
lines to $Z$ meeting at $w$ coming from $\Gamma$.
The claim is now proved.
\end{proof}

Let us now finish the proof of the theorem (still for $m=11$).
We have several cases, according to the configuration of $\Gamma$.
We give figures for the first two cases.

\begin{figure}[h]
  \centering
\begin{tikzpicture}[scale=0.8]
\draw (-0.5,0) node[above] {$w$}; \draw (0,0) node {$\bullet$};
\draw (1,0) node[below] {$z_8$}; \draw (1,0) node {$\bullet$};
\draw (2,0) node[below] {$z_9$}; \draw (2,0) node {$\bullet$};
\draw (3,0) node[below] {$z_{10}$}; \draw (3,0) node {$\bullet$};
\draw (4,0) node[below] {$z_{11}$}; \draw (4,0) node {$\bullet$};
\draw (1,1) node[right] {$z_1$}; \draw (1,1) node {$\bullet$};
\draw (2,2) node[right] {$z_2$}; \draw (2,2) node {$\bullet$};
\draw (3,3) node[below] {$z_3$}; \draw (3,3) node {$\bullet$};
\draw (0,1) node[left] {$z_5$}; \draw (0,1) node {$\bullet$};
\draw (0,2) node[left] {$z_6$}; \draw (0,2) node {$\bullet$};
\draw (0,3) node[left] {$z_7$}; \draw (0,3) node {$\bullet$};
\draw (3,1) node[left] {$z_4$}; \draw (3,1) node {$\bullet$};
\draw[very thick] (-1,0) -- (6,0) ;
\draw (-1,-1) -- (4,4);
\draw (0,-1) -- (0,4);
\draw[thick] (5.5,0) node[above] {$L$};
\end{tikzpicture}
  \caption{Case \eqref{2+2}}
\end{figure}
\begin{enumerate}[i)]
\item {\it $\Gamma$ consists of $2$ strict $3$-secants to $Z$ passing
  through $w$}. \label{2+2} In this case we have a point $z \in Z$ lying off the
lines of $\Gamma$ ($z=z_4$ in the figure above). Since $Z$ shares the combinatorial type with a free
arrangement, we have $t_{\tilde Z,z} \ge 5$ by
Proposition \ref{tZ}. Moreover, $\tilde Z$ is free with exponents
$(4,7)$, so in fact again Proposition \ref{tZ} yields $t_{\tilde Z,z} \ge 6$.
We see that this give two strict $4$-secant lines to $Z$ through $z$
(although they do not show up in the picture above!).
Finally we have $b_{\tilde Z,5}=1$ (the line $L$) and $b_{\tilde Z,4}=4$ (the pairs
of $4$-secants through $z$ and $w$).
This gives $b_{\tilde Z,3}=9$ by \eqref{multipli chern}, hence 
$b_{\tilde Z,2}=5$ by \eqref{multipli}.
But this contradicts {\it Hirzebruch's inequality} (see
\cite{hirzebruch:arrangements}), in the ``improved'' version:
\[
\mbox{$b_{Z,2} + \frac 34 b_{Z,3} \ge m + \sum_{h \ge 5} (2h-9)b_{Z,h}$},
\]
valid for arrangements $Z$ of $m$ complex projective lines having $b_{Z,m}=b_{Z,m-1}=b_{Z,m-2}=0$.

\begin{figure}[h]
  \centering
\begin{tikzpicture}[scale=0.8]
\draw (-0.5,0) node[above] {$w$}; \draw (0,0) node {$\bullet$};
\draw (1,0) node[below] {$z_8$}; \draw (1,0) node {$\bullet$};
\draw (2,0) node[below] {$z_9$}; \draw (2,0) node {$\bullet$};
\draw (3,0) node[below] {$z_{10}$}; \draw (3,0) node {$\bullet$};
\draw (4,0) node[below] {$z_{11}$}; \draw (4,0) node {$\bullet$};
\draw (1,1) node[right] {$z_1$}; \draw (1,1) node {$\bullet$};
\draw (2,2) node[right] {$z_2$}; \draw (2,2) node {$\bullet$};
\draw (2,1) node[below] {$z_3$}; \draw (2,1) node {$\bullet$};
\draw (4,2) node[below] {$z_4$}; \draw (4,2) node {$\bullet$};
\draw (0,1) node[left] {$z_5$}; \draw (0,1) node {$\bullet$};
\draw (0,2) node[left] {$z_6$}; \draw (0,2) node {$\bullet$};
\draw (0,3) node[left] {$z_7$}; \draw (0,3) node {$\bullet$};
\draw[very thick] (-1,0) -- (6,0) ;
\draw (-1,-1) -- (3,3);
\draw (-2,-1) -- (6,3);
\draw (0,-1) -- (0,4);
\draw[thick] (5.5,0) node[above] {$L$};
\draw (0,4) node[left] {$L_3$};
\draw (3,3) node[left] {$L_1$};
\draw (6,3) node[below] {$L_2$};
\end{tikzpicture}
  \caption{Case \eqref{1+1+2}}
\end{figure}
\item \label{1+1+2} {\it $\Gamma$ consists of $2$ strict bisecant lines $L_1,L_2$ and one strict
    $3$-secant line $L_3$ to $Z$ passing through $w$}.
  Let the $\{z_{2i-1},z_{2i}\} = L_i \cap Z$ for $i=1,2$.
  In this case, by the same argument as we see that, for all $i,j \in \{1,2\}$ the
  lines through $z_{i}$ and $z_{2+j}$ has to meet $L_3$ and $L$
  along a point of $Z$ (although this cannot be seen in the picture above!).
  We get $b_{\tilde Z,5}=1$ and $b_{\tilde Z,4}=5$.
  Then $b_{\tilde Z,3}=6$ by \eqref{multipli chern}, hence 
  $b_{\tilde Z,2}=8$ by \eqref{multipli}.
  Again this contradicts Hirzebruch's inequality.
\item {\it $\Gamma$ consists of one strict bisecant line and one strict
    $4$-secant line to $Z$ passing through $w$}.
  In this case we have $b_{\tilde Z,5}=2$ and $b_{\tilde Z,4}=0$.
  Using formulas \eqref{multipli chern} and \eqref{multipli} again we
  obtain a contradiction with Hirzebruch's inequality.
\item {\it $\Gamma$ consists of one strict $5$-secant line 
    to $Z$ passing through $w$}.
  This case is analogous to the previous one and we omit it.
\end{enumerate}

Finally, let us only sketch the idea for $m=12$.
We have to deal with an arrangement $\cA=\cA_Z$ given by $12$ points
$Z \subset \PD^2$
having a $4$-secant line $L$ and no $5$-secant lines.
Again we set $Z'=Z \cap L$ and $Z''=Z\setminus L$.
By the same argument as the for the case $m=11$, we will have a point
$w \in L \setminus Z$, such that all bisecant lines to $Z''$ meet $L$ either at
a point of $Z$, either at $w$.
We have an exact sequence analogous to \eqref{attenzione}, that
provides with, this time, $6$ such bisecants through $w$ (counted with
multiplicity).
Since $Z$ has no $5$-secant lines, this leaves as the only
possibility that there are $2$ strict $4$-secant lines to $Z''$
through $w$, in such a way that all other bisecant to $Z''$ meets $L$
along $Z'$.
This can be proved to be impossible again by Hirzebruch's inequality,
or also by a simple lemma due to Kelly, \cite[Lemma 2]{kelly:serre}.
\end{proof}

\bibliographystyle{alpha}
\bibliography{bibliography}
\end{document}